\newtheorem{thm}{Theorem}[section]
\newtheorem{fact}[thm]{Fact}
\newtheorem{corol}[thm]{Corollary}
\newtheorem{lemma}[thm]{Lemma}
\newtheorem{prop}[thm]{Proposition}
\newtheorem{quest}[thm]{Question}
\newtheorem{defi}[thm]{Definition}
\theoremstyle{remark}
\newtheorem{remark}[thm]{Remark}
\newtheorem{example}[thm]{Example}
\newcommand{\ben}{\begin{enumerate}}
\newcommand{\een}{\end{enumerate}}
\newcommand{\bit}{\begin{itemize}}
\newcommand{\eit}{\end{itemize}}
\newcommand{ \hXm}{(\widehat{X},\widehat{\mu})}
\def\R {{\Bbb R}}
\def\N{{\Bbb N}}
\def\T{{\Bbb T}}
\def\Z {{\Bbb Z}}
\def\Homeo{{\mathrm{Homeo}}\,}
\def\K {\mathcal K}
\def\NA  {\mathcal{NA}}
\def\QED{\nobreak\quad\ifmmode\roman{Q.E.D.}\else{\rm Q.E.D.}\fi}
\def\a {\alpha}
\begin{document}

\title[]
{Notes on non-archimedean topological groups}

%Authors
%    Information for first author
\author[]{Michael Megrelishvili}
\address{Department of Mathematics,
Bar-Ilan University, 52900 Ramat-Gan, Israel}
\email{megereli@math.biu.ac.il}
\urladdr{http://www.math.biu.ac.il/$^\sim$megereli}

%    Information for second author

\author[]{Menachem Shlossberg}
\address{Department of Mathematics,
Bar-Ilan University, 52900 Ramat-Gan, Israel}
%msSep\email{menysh@yahoo.com}
\email{shlosbm@macs.biu.ac.il}
 \urladdr{http://www.math.biu.ac.il/$^\sim$shlosbm}

%0606
\date{June 06, 2011}

\keywords{Boolean group, epimorphisms, Heisenberg group,
isosceles, minimal group, free topological $G$-group, non-archimedean group, Stone
duality, ultra-metric, ultra-norm}

\dedicatory{Dedicated to Professor Dikran Dikranjan on his 60th birthday}

\begin{abstract} %We study non-archimedean groups with strong emphasis on the following issues: minimality,
We show that the Heisenberg type group $H_X=(\Bbb{Z}_2 \oplus
V)\leftthreetimes V^{\ast}$, with the discrete Boolean group
$V:=C(X,\Z_2)$, canonically defined by any Stone space $X$, is
always minimal. That is, $H_X$ does not admit any strictly coarser
Hausdorff group topology. This leads us to the following result: for
every (locally compact) non-archimedean $G$ there exists a (resp.,
locally compact) non-archimedean  minimal group $M$ such that $G$ is
a group retract of $M.$
 For discrete groups $G$ the latter was proved by S. Dierolf and U. Schwanengel \cite{DIS79}.
We unify some old and new characterization results for
non-archimedean groups.
%2005
%Among others we show that every continuous group action of $G$ on a Stone space $X$ is a restriction of a continuous group action by automorphisms
% of $G$ on a topological (even, compact) group %2104$K$.
% $K.$
 We show also that any epimorphism into a non-archimedean group must be dense.
\end{abstract}
\maketitle

\setcounter{tocdepth}{1}

 \tableofcontents

\section{Introduction and preliminaries}

%! Question of Tigran

 A topological group is {\it
non-archimedean} if it has a local base at the identity consisting
of open subgroups.
This class of groups %msJulcoincide
coincides with the class of topological subgroups of the
homeomorphism groups
%28.10 replacing globally Homeo( by \Homeo(
$\Homeo(X)$, where
 $X$ runs over \emph{Stone spaces} (=compact zero-dimensional
 spaces)
 %26.9
 and $\Homeo(X)$ carries the usual compact open topology.
 Recall that by Stone's representation theorem,
 %26.9 for Boolean algebras,
  there is a duality between the category of Stone spaces and the category of Boolean
 algebras.
 The class $\mathcal{NA}$ of non-archimedean groups and their actions on ultra-metric
  and Stone spaces
 have many applications. For instance, in non-archimedean functional analysis, in descriptive set theory, computer science, etc.
 %28.10 new reference of Lemin Lem03
 See, e.g., \cite{ro, bk, Les86, Lem03} and references therein.

In the present paper we provide some applications of generalized
Heisenberg groups, with emphasis on minimality properties, in
the theory of $\mathcal{NA}$ groups and actions on Stone
spaces.

Recall that a Hausdorff topological group $G$ is %called
{\it minimal} (Stephenson \cite{STE71} and Do\"\i chinov \cite{DOI72}) if it does not admit a strictly
coarser Hausdorff group topology, or equivalently, if every
injective continuous group homomorphism $G\rightarrow P$ into a
Hausdorff topological group is a %21.9 adding the word topological
topological group embedding.
%\emph{Minimal
%topological rings} are defined similarly (Dikranjan \cite{DIK80,Dik82}).
%%%We consider below only Hausdorff groups.

If otherwise is not stated
all topological groups and spaces in this paper are assumed to be Hausdorff.
We say that an additive topological %(necessarily abelian)
group $(G,+)$ is a \emph{Boolean group} if $x+x=0$ for every $x \in G$.
%It can be treated as a vector space over the field $\Z_2$.
As usual, a \emph{$G$-space} $X$ is a topological space $X$ with a
%1104
continuous
group action $\pi: G \times X \to X$ of a topological group $G$. We say that $X$ is a \emph{$G$-group}
if, in addition, $X$ is a topological group and all $g$-translations,
$\pi^g: X \to X, \  x \mapsto gx:=\pi(g,x)$, are automorphisms of $X$. For every $G$-group $X$ we denote by $X \leftthreetimes G$ the corresponding
topological semidirect product.

%2005 several changes into Introduction I give changes without marking so please everything carefully
%\subsection{Generalized Heisenberg groups and minimality}

\vskip 0.4cm

 To every Stone space $X$ we associate a (locally compact 2-step nilpotent) Heisenberg type group
 $$H_X=(\Bbb{Z}_2 \oplus V) \leftthreetimes V^{\ast},$$ where
$V:=C(X,\Z_2)$ is a discrete Boolean group which can be identified
with the group of all clopen subsets of $X$ (symmetric difference is
the group operation). $V^{\ast}:=Hom(V,\Z_2)$ is the compact group
of all group homomorphisms into the two element cyclic group $\Z_2$.
%msFeb11 adding the fact that $V^{\ast}$ acts on $\Bbb{Z}_2 \oplus V$
$V^{\ast}$ acts on $\Bbb{Z}_2 \oplus V$ in the following way: every
$(f,(a,x))\in V^{\ast}\times (\Bbb{Z}_2 \oplus V)$ is mapped to
$(a+f(x),x)\in \Bbb{Z}_2 \oplus V.$ The group operation on $H_X$ is
defined as follows: for
$$u_{1}=(a_{1},x_{1},f_{1}), \ u_{2}=(a_{2},x_{2},f_{2}) \in H_X$$ we
define $$u_{1}u_{2}=(a_{1}
+a_{2}+f_{1}(x_{2}),x_{1}+x_{2},f_{1}+f_{2}).$$ %msFeb11 omitting the following:where,
%$f_{1}(x_{2})=w(x_{2},f_{1})$ is the evaluation map $w: V \times V^*
%\to \Z_2$.

In Section \ref{s:main} we study some properties of $H_X$ and show
in particular (Theorem \ref{t:1}) that
%\begin{thm} \label{main0}
the (locally compact) Heisenberg group $H_X=(\Bbb{Z}_2\times
V)\leftthreetimes V^{\ast}$ is minimal and non-archimedean for every
Stone space $X$.
%\end{thm}

%\subsection{The actions which come from automorphisms}

%Another observation concerns the classical Stone duality.
Every
Stone space $X$ is naturally embedded into $V^*:=Hom(V,\Z_2)$
by the natural map %2104 adding space $\delta: X \to V^*, x \mapsto \delta_x$
 $\delta: X \to V^*, \ x \mapsto \delta_x$ where
$\delta_x(f):=f(x)$.
%19.9
Every $\delta_x$ can be treated as a 2-valued measure on $X$.
Identifying $X$ with $\delta(X) \subset V^*$ we
get a restricted evaluation map $V \times X \to \Z_2$ which in fact
is the evaluation map of the Stone duality.
%19.9 deleting 2 lines
%Precisely, $X=\delta(X) \subset V^*$
%can be identified in this way with a subset of Boolean algebra homomorphisms $V \to \Z_2$.
Note that the role %msFeb11 adding  of
of $\delta: X \to V^*$ for a compact space $X$ is similar to the
role of the Gelfand map $X \to C(X)^*$, representing $X$ via the
point measures.

For every action of a group $G \subset \Homeo(X)$ on a Stone space $X$
we can deal with a $G$-space version
of the classical Stone duality. The map $\delta: X \to V^*$ is a
$G$-map of $G$-spaces. Every continuous group action of $G$ on a
Stone space $X$ is \emph{automorphizable}
%?\footnote{Do you want to replace "automorphizable" by  "linearizable"? mm = NOT}
in the sense of \cite{Me-F} meaning that $X$ is a $G$-subspace of
a $G$-group $K$. This contrasts the case of general compact spaces
(see \cite{Me-F}).
More generally, we study %msSep(Proposition \ref{p:AE})
(Theorem \ref{t:AE}) also metric and uniform versions of
automorphizable actions. %This result we use later in Theorem \ref{epic=dense}.

Furthermore, a deeper analysis shows (Theorem \ref{t:2}) that every
topological subgroup $G \subset \Homeo(X)$ induces a continuous
action of $G$ on $H_X$ by automorphisms such that the corresponding
semidirect product $H_X \leftthreetimes G$ is a minimal group.

%21.9As a corollary we conclude (Theorem \ref{c:main}) that
We then conclude (Corollary \ref{c:main}) that
%\begin{thm} \label{main2}
every (locally compact) non-archimedean group is a group retract of
a (resp., locally compact) minimal non-archimedean group.
%\end{thm}
It covers a result of Dierolf and Schwanengel \cite{DIS79} (see also
Example \ref{exa:cyc} below) which asserts that every discrete group
is a group retract of a locally compact non-archimedean minimal
group.

Section \ref{s:min} contains additional motivating results and
questions.
%26.9  See for instance Questions \ref{q:loc} and \ref{q:gen}.
Several interesting
applications of generalized Heisenberg groups can be found in the
papers \cite{MEG95, MEG98, MEG04, DTY06, MEG08, DG09, DIM04, SHL07}.

Studying the properties of the Heisenberg group $H_X$, we get
%1104,
a unified approach to several (mostly known) equivalent
characterizations of the class $\mathcal{NA}$ of non-archimedean
groups (Lemma \ref{l:firstpart} and Theorem \ref{t:condit}). In
particular, we show that the class of all topological subgroups of
$Aut(K),$  for compact abelian groups $K,$  is precisely
$\mathcal{NA}.$
 %we collect some
%msAug In particular we show that subgroups of $Aut(K)$ for compact abelian groups $K$ is just the class $\mathcal{NA}$.

%\subsection{Epimorphisms into non-archimedean groups}

  A morphism $f: M \to G$
 in a category $\mathcal{C}$ is an {\it epimorphism} if there are no
 different morphisms $g,h: G\to F$ in $\mathcal{C}$ such that $gf=hf.$
 In the category of Hausdorff topological
 groups a morphism with dense range is obviously an epimorphism.
 %19.9
 K.H. Hofmann asked in late 1960's whether the converse is true.
 This \emph{epimorphism problem} was answered by Uspenskij \cite{Us-epic} in the negative.
%0805 replace "Though" by "Nevertheless" i also added a coma  after "Nevertheless"
Nevertheless, in many natural cases indeed the epimorphism $M \to G$
must be dense. For example, Nummela \cite{Num} has shown it in the
case that the co-domain $G$ is either locally compact or
%26.9
having the coinciding left and right uniformities.
Using a criterion of Pestov \cite{Pest-epic} and the uniform
automorphizability of certain actions by non-archimedean groups
%under natural restrictions
(see Theorem \ref{t:AE}) we prove in Theorem \ref{epic=dense} that
 any epimorphism $f: M \to G$ into a non-archimedean group $G$ must be dense.

\vskip 0.5cm
\noindent \textbf{Acknowledgment:} We thank Dikranjan for many
valuable ideas and concrete suggestions. We are indebted also to the
referee for several improvements.

\section{Minimality and group representations}
\label{s:min}

 Clearly, every compact topological group is minimal.
 Trivial examples of nonminimal groups
 %28.10 , for instance,
 are: the group $\Z$ of all integers (or any discrete infinite abelian group)
and $\R$, %21.9 adding the word the
the topological group of all reals.
%That is, a subgroup of a compact group.
%1805 adding \cite{PRS85} and also ...
By a fundamental theorem of Prodanov and Stoyanov \cite{PRS85} every abelian
minimal group is precompact. %(as in item (1)).
%1805  moving this part here from introduction
  For more information
about minimal groups see review papers of Dikranjan
\cite{DIK03} and  Comfort-Hofmann-Remus \cite{CHR}, a book of
Dikranjan-Prodanov-Stoyanov \cite{DPS89} and a recent book of
Lukacs \cite{Lukacs}.

Unexpectedly enough many non-compact naturally defined topological
groups
%Analysis and applications
are minimal.

\begin{remark} \label{r:min}
Recall some nontrivial examples of minimal groups.
 \ben
%\item Compact Hausdorff topological groups.

\item
%1805
Prodanov \cite{PRO71} showed that the $p$-adic topologies are
the only precompact minimal group topologies on ${\Z}$. %where $p$ is a prime number.
%$\Z$ with the $p$-adic topology (Do\"\i chinov).
\item Symmetric topological groups $S_X$ (Gaughan \cite{GAU67}).
\item $\Homeo(\{0,1\}^{\aleph_0})$ (see Gamarnik \cite{Ga})
and also Uspenskij \cite{USP01} for a more general case).
\item $\Homeo[0,1]$ (Gamarnik \cite{Ga}).
\item
The semidirect product $\R \leftthreetimes \R_+$
(Dierolf-Schwanengel \cite{DIS79}). More general cases of minimal (so-called \emph{admissible}) semidirect products were studied by
Remus and Stoyanov \cite{RES91}.
%\footnote{Dikran thinks that
%maybe \cite{RES91} is more precise reference  I don't have or at
%least I didn't find \cite{RES91} but I've found the result mentioned
%here at \cite{DIS79}. So, I think this reference (\cite{DIS79}) is
%o.k. since \cite{DIS79} is 12 years before \cite{RES91}. What do you
%say? mm=Done!}
By \cite{MEG98},
$\R^n \leftthreetimes \R_+$ is minimal for every $n \in \N$.

\item Every connected semisimple Lie group with finite center, e.g., $SL_n(\R)$, $n \geq 2$ (Remus and Stoyanov \cite{RES91}).

%1805 adding \cite{STO84}
\item The full unitary group $U(H)$ (Stoyanov \cite{STO84}).
\een
\end{remark}

%Observe that the groups in the items (1), (2) and (3) are
%non-archimedean.

%As Remark
%\ref{r:min} and some other known examples show there are however
%many noncommutative minimal concrete groups which are very far
%from being precompact.
One of the immediate difficulties
%26.9  studying the minimality  %in the non-commutative case
is the fact that %2104 the
 minimality is not preserved by quotients and
(closed) subgroups. See for example item (5) with minimal $\R
\leftthreetimes \R_+$ where its canonical quotient $\R_+$ (the
positive reals) and the closed normal subgroup $\R$ are nonminimal.
As a contrast note that in a minimal \emph{abelian} group every
closed subgroup is minimal \cite{DPS89}.

In 1983 Pestov raised the conjecture that every topological group is
a group retract of a minimal group. Note that if $f: M \to G$ is a
group retraction then necessarily $G$ is a quotient of $M$ and also
a closed subgroup in $M$. Arhangel'ski\u\i \ asked the following
closely related questions:

 \begin{quest} \label{Arh}
 \emph{(\cite{Ar}, \cite{MILLR})}
 Is every topological group a quotient of a minimal
group? Is every topological group a closed subgroup of a minimal
group?
\end{quest}

By a result of Uspenskij \cite{USP98} every topological group is a
subgroup of a minimal group $M$ which is Raikov-complete,
topologically simple and Roelcke-precompact.

 Recently a positive answer to
 Pestov's conjecture (and hence to %21.9 the
 Question \ref{Arh} of
 Arhangel'ski\u\i) was obtained in \cite{MEG08}.
 The proof is based on
methods (from \cite {MEG95}) of constructing minimal groups using
group representations on Banach spaces and involving generalized
Heisenberg groups.

According to \cite{MEG95} every locally compact \emph{abelian} group
is a group retract of a minimal locally compact group.
 It is
an open question whether the same is true in the nonabelian case.
%even for a Lie group

\begin{quest} \label{q:loc}
\emph{(\cite{MEG95}, \cite{MEG08} and \cite{CHR})} Is it
true that every locally compact group $G$ is a group retract
 (at least a subgroup or a quotient) of a locally compact minimal group?
\end{quest}

 A more general natural question is the following:

 \begin{quest} \cite{MEG95} \label{q:gen} Let $\K$ be a
 certain class of topological groups and  $\underline{min}$ denotes
 the class of all minimal groups. Is it true that every $G\in \K$ is
 a group retract of a group $M \in \K \cap  \underline{min}$ ?
\end{quest}

So Corollary \ref{c:main}  %of the present paper
gives a partial answer to Questions \ref{q:loc} and
\ref{q:gen} in the class $\K:=\NA$ of %ms020610non-archimedean groups.
non-archimedean groups.
%\footnote{I added the following remark after
%I saw the exchange of e-mails between you and Dikran concerning
%Question \ref{q:gen} in the metrizable case mm=I add something }
%0805 adding the following remark
\begin{remark}
Note that by \cite[Theorem 7.2]{MEG08} we can present any
topological group $G$ as a group retraction $M \rightarrow G,$ where
$M$ is a minimal group having the same weight and character as $G.$
%1805
Furthermore, if $G$ is Raikov-complete then $M$ also has the same property.
 These results provide in particular a positive answer to Question \ref{q:gen} in the following basic classes: second countable groups,
metrizable groups, Polish groups.
\end{remark}

\subsection{Minimality properties of actions}

\begin{defi} \label{d:t-ex}
Let $\alpha:G\times X\rightarrow X, \ \alpha(g,x)=gx$ be a
continuous action of a Hausdorff topological group $(G,\sigma)$ on a
Hausdorff topological space $(X,\tau)$. The action $\alpha$ is said
to be: \ben
 \item \emph{algebraically exact} if %the kernel of the action
 $ker_{\alpha}:=\{g \in G: \ gx=x \ \ \forall x \in X\}$ is the trivial subgroup $\{e\}$.
 \item \emph{topologically exact} (\emph{$t$-exact}, in short) if there is no strictly
coarser, not necessarily Hausdorff, group topology
$\sigma'\varsubsetneq \sigma$ on $G$ such that $\alpha$ is
$(\sigma', \tau, \tau)$-continuous. \een
\end{defi}

 \begin{remark}
 \ben
\item
Every topologically exact action is algebraically exact.
Indeed, otherwise $ker_{\alpha}$ is a non-trivial subgroup
in %2104$G$.
 $G.$ Then the preimage group topology $\sigma' \subset \sigma$ on
$G$ induced by the onto homomorphism $G \to G/ker_{\alpha}$ is not
Hausdorff (in particular, it differs $\sigma$) and the action
remains $(\sigma', \tau, \tau)$-continuous.
\item
On the other hand, if $\alpha$ is algebraically exact then it is
topologically exact if and only if for every strictly coarser
\emph{Hausdorff} group topology $\sigma'\varsubsetneq \sigma$ on $G$
the action $\alpha$ is not $(\sigma', \tau, \tau)$-continuous.
Indeed, since $\alpha$ is algebraically exact and
$(X,\tau)$ is Hausdorff then every coarser group topology $\sigma'$
on $G$ which makes the action $(\sigma', \tau, \tau)$-continuous
must be Hausdorff.
 \een
\end{remark}

Let $X$ be a locally compact group and $Aut(X)$ be the group of all
automorphisms endowed with the \emph{Birkhoff topology}
 (see \cite[\S 26]{HR} and \cite[p. 260]{DPS89}).
 Some authors use the name \emph{Braconnier topology} (see \cite{CM}).

The latter is a group topology on $Aut(X)$ and has a local base
 formed by the sets
$$\mathcal{B}(K,O):=\{f \in Aut(X): \ f(x) \in Ox \ \text{and} \ f^{-1}(x) \in Ox \ \ \forall \ x \in K\}$$
where $K$ runs over compact subsets and $O$ runs over neighborhoods
of the identity in %2104$X$.
 $X.$ In the sequel $Aut(X)$ is always equipped
with the Birkhoff topology.
%19.9 Meny please check
It equals to the Arens \emph{$g$-topology} %21.9 replacing\cite{Are} with \cite{AR46}
%26.9  you are right I meant just this "new paper" of Arens
\cite{AR46, CM}. %%
%21.9 About: "It equals to the Arens \emph{$g$-topology}"   You are right see details at the appendix
%21.9just one remark/thought:in page 18 of the paper "decomposing loc. comp groups which I sent you (I'll send you this paper again)
%21.9they say about the Braconnier top (which we call Birkhoff topology)
%21.9:" coincides with the restriction of the $g$-top on the group of al
%21.9homeomorphisms of $G$ introduced by Arens" should we replace "equals
%21.9to the Arens \emph{$g$-topology}" to "equals to the restriction of
%21.9Arens \emph{$g$-topology}"? my answer is not but let's discuss this
 If $X$ is compact then the Birkhoff
topology coincides with the usual compact-open topology. If $X$ is
discrete then the Birkhoff topology on $Aut(X) \subset X^X$
coincides with the pointwise
topology. %We omit the straightforward proof of the following lemma.

\begin{lemma} \label{are2} In each of the following cases the action of $G$ on $X$ is t-exact:
 \ben
 \item \cite{MEG95}  Let $X$ be a locally compact group and $G$ be a subgroup of $Aut(X)$.
  \item  Let $G$ be a topological subgroup of $\Homeo(X)$, the group of all autohomeomorphisms
  of a compact space $X$ with the compact open topology.
 \item
 Let $G$ be a subgroup of $Is(X,d)$ the group of all isometries of a metric space $(X,d)$ with the pointwise topology.

 \een
 \end{lemma}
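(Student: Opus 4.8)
The plan is to handle the three cases by a single mechanism. Fix a group topology $\sigma'$ on $G$ with $\sigma'\subseteq\sigma$ and such that $\alpha$ is $(\sigma',\tau,\tau)$-continuous; it then suffices to prove the reverse inclusion $\sigma\subseteq\sigma'$, because this forces $\sigma'=\sigma$ and so rules out \emph{any} strictly coarser group topology (Hausdorff or not) for which $\alpha$ stays continuous, which is precisely $t$-exactness. In each case $\sigma$ comes with an explicit neighborhood base at the identity $e\in G$, so the task reduces to showing that every basic $\sigma$-neighborhood of $e$ contains a $\sigma'$-neighborhood of $e$. The tool is the same each time: continuity of $\alpha$ at the points $(e,x)$, together with a compactness argument in cases (1) and (2) (and mere finiteness of the prescribed point set in case (3)), plus the observation that, since inversion is $\sigma'$-continuous, the map $(g,x)\mapsto g^{-1}x$ is $(\sigma',\tau,\tau)$-continuous as well.

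Concretely, for case (2) the topology $\sigma$ on $G\subset\Homeo(X)$ ($X$ compact) is uniform convergence, with base at $e$ the sets $W_V=\{g\in G:\ (x,gx)\in V\ \ \forall x\in X\}$, $V$ a symmetric entourage of the unique uniformity of $X$; note each $W_V$ is symmetric. Given $V$, pick a symmetric $V_0$ with $V_0\circ V_0\subseteq V$. For each $x\in X$, continuity of $\alpha$ at $(e,x)$ supplies a $\sigma'$-neighborhood $N_x$ of $e$ and an open $O_x\ni x$ with $O_x\subseteq V_0[x]$ and $\alpha(N_x\times O_x)\subseteq V_0[x]$, where $V_0[x]=\{y:(x,y)\in V_0\}$. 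Covering $X$ by $O_{x_1},\dots,O_{x_n}$ and setting $N=\bigcap_iN_{x_i}$, one checks $N\subseteq W_V$: for $g\in N$ and $y\in O_{x_i}$ we get $(x_i,y)\in V_0$ and $(x_i,gy)\in V_0$, hence $(y,gy)\in V$. Case (3) is the easy one: $\sigma$ on $G\subset Is(X,d)$ has base $W_{F,\e}=\{g:\ d(gx,x)<\e\ \ \forall x\in F\}$ with $F\subset X$ finite, and for each $x\in F$ continuity of $\alpha$ at $(e,x)$ gives a $\sigma'$-neighborhood $N_x$ with $d(gx,x)<\e$ for $g\in N_x$, so $N=\bigcap_{x\in F}N_x\subseteq W_{F,\e}$. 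Case (1) (already in \cite{MEG95}) runs along the same template: choosing a symmetric identity neighborhood $O'$ of $X$ with $O'O'\subseteq O$, applying continuity of $\alpha$ and of $(g,x)\mapsto g^{-1}x$ at the points $(e,x)$ with $x$ in the relevant compact set $K$, and covering $K$, one produces a $\sigma'$-neighborhood of $e$ contained in the basic Birkhoff neighborhood $\mathcal{B}(K,O)$ (both the $f$- and $f^{-1}$-conditions there are obtained this way, since no symmetry shortcut is available for a fixed $K$).

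The one genuinely delicate point — and the main obstacle — is the passage from the \emph{pointwise} data (what continuity of $\alpha$ at a single $(e,x)$ provides) to the \emph{uniform} statement encoded in $\sigma$, namely a single neighborhood of $e$ that works simultaneously at every point of $X$, or of $K$. This is exactly where compactness of $X$ in case (2), or of the control set $K$ in case (1), is invoked, and where the triangle-type estimates $V_0\circ V_0\subseteq V$ and $O'O'\subseteq O$ enter, to absorb the discrepancy between a point $y$ and the center $x_i$ of the patch containing it. In case (3) this difficulty disappears because $\sigma$ is already defined by finitely many point-evaluations. Finally, it is worth noting in passing that in all three cases $G$ acts faithfully, so the action is algebraically exact and, by the Remark after Definition \ref{d:t-ex}, it would in fact suffice to exclude strictly coarser \emph{Hausdorff} group topologies; but the argument above delivers the stronger equality $\sigma'=\sigma$ at no extra cost.
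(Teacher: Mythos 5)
Your proposal is correct, and it is precisely the ``straightforward'' argument the paper leaves to the reader: in each case one compares neighborhood filters at the identity, using continuity of the action at the points $(e,x)$ together with compactness of $X$ (resp.\ of $K$, resp.\ finiteness of $F$) and a $V_0\circ V_0\subseteq V$ (resp.\ $O'O'\subseteq O$) absorption step. The paper's proof of Lemma \ref{are2} consists of the single word ``Straightforward,'' so there is no divergence to report; your write-up simply supplies the omitted details, including the correct handling of the $f^{-1}$-clause in the Birkhoff base via $\sigma'$-continuity of inversion.
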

 \begin{proof}
 Straightforward.
 \end{proof}

\subsection{From minimal dualities to minimal
groups}\label{sub:dual}

In this %1905section
 subsection we recall some definitions and results from \cite{MEG95,
MEG08}.
%This fact plays a crucial role in proving our main result.
%We need some preliminary definitions
%(mostly from \cite{MEG95}) before.

Let $E,F,A$ be abelian additive %Hausdorff
topological groups. A map
$w:E\times F \rightarrow A$ is said to be {\it biadditive }if the
induced mappings
$$w_{x}:F\rightarrow A,\ w_{f}:E\rightarrow A,
\ w_{x}(f):=w(x,f)=:w_{f}(x)$$ are homomorphisms for all $x\in E$
and $f\in F$.

 A biadditive mapping
$w:E\times F \rightarrow A$ is {\it separated} if for
every pair $(x_{0},f_0)$ of nonzero elements there exists a pair
$(x,f)$ such that $f(x_0)\neq 0_{A}$ and  $f_0(x)\neq 0_{A}$.%ms1110, where
%ms1110$f(x)=w(x,f)$.\\

 A continuous separated biadditive mapping $w:(E,\sigma)\times (F,\tau)
\rightarrow A$ is {\it minimal} if for every coarser
pair $(\sigma_1, \tau_1)$ of Hausdorff group topologies
$\sigma_1\subseteq \sigma,\ \tau_1\subseteq \tau$ such that $w:
(E,\sigma_1)\times (F,\tau_1) \rightarrow A$ is continuous, it
follows that $\sigma_1=\sigma$ and $\tau_1=\tau.$

Let $w:E \times F \rightarrow A$ be a continuous biadditive mapping.
%msFeb11 defining the action w^{\triangledown}
 Consider the action:
%1104 replacing "e" by "x"
 $w^{\triangledown}:F\times(A \oplus E)\to A \oplus E, \quad w^{\triangledown}(f,(a,x))=(a+w(x,f),x).$
Denote by
$H(w)=(A \oplus E)\leftthreetimes F$ the topological semidirect
product of $F$ and the direct sum $A \oplus E$. The group operation
on $H(w)$ is defined as follows: for a pair
$$u_{1}=(a_{1},x_{1},f_{1}), \ u_{2}=(a_{2},x_{2},f_{2}) $$ we
define $$u_{1}u_{2}=(a_{1}
+a_{2}+f_{1}(x_{2}),x_{1}+x_{2},f_{1}+f_{2})$$ where,
%2104$f_{1}(x_{2})=w(x_{2},f_{1})$.
 $f_{1}(x_{2})=w(x_{2},f_{1}).$ Then $H(w)$ becomes a Hausdorff
topological group which is said to be a {\it generalized Heisenberg
group} (induced
by $w$). %%In the case of a normed space $X$ and a canonical
%biadditive function $w:X\times X^{\ast}\rightarrow \Bbb R $
%$(x,f)\mapsto f(x)$ (where $X^{\ast}$ is the %ms1110Banach space of all
%ms1110continuous functionals from $X$ to $\Bbb R$, known as the dual space
%dual Banach  space of $X$) we write $H(X)$ instead of $H(w)$.

Let $G$ be a topological group and let $w:E\times F\rightarrow A$ be
a continuous biadditive mapping. A continuous {\it birepresentation} of $G$ in
$w$ is a pair $(\alpha_1,\alpha_2)$ of continuous actions by group %ms020610 adding the words: "by group automorphisms" actions by group
automorphisms $\alpha_1:G\times E\rightarrow E$ and
$\alpha_2:G\times F\rightarrow F$ such that $w$ is $G$-invariant,
i.e., $w(gx,gf)=w(x,f)$.

The birepresentation $\psi$ is said to be \emph{t-exact} if
$ker(\alpha_1) \cap ker(\alpha_2)=\{e\}$ and for every strictly
coarser \emph{Hausdorff} group topology  on $G$ the birepresentation
does not remain continuous.
For instance, if one of the actions $\alpha_1$ or $\alpha_2$ is
t-exact then clearly $\psi$ is t-exact.

Let $\psi$ be a continuous $G$-birepresentation $$\psi=(w:E\times
F\rightarrow A, \ \alpha_1:G\times E\rightarrow E, \ \alpha_2:G\times
F\rightarrow F).$$ The topological semidirect product
$M(\psi):=H(w)\leftthreetimes_\pi G$ is said to be the \emph{induced
group}, where the action $\pi:G\times H(w)\rightarrow H(w) $ is
defined by $$\pi(g,(a,x,f))=(a,gx,gf).$$ %with $gx=\alpha_1(g,x), \ gf=\alpha_2(g,f).$

\begin{fact} \label{fac:bir}
Let $w:E\times F\rightarrow A$ be a minimal biadditive mapping and
$A$ is a minimal group. Then

\ben
\item \cite[Corollary 5.2]{DIM04}  The Heisenberg group $H(w)$ is minimal.
\item
\emph{(See \cite[Theorem 4.3]{MEG95} and \cite{MEG08})} If $\psi$ is a $t$-exact $G$-birepresentation in $w$
%ms0206 Then $ M(w)$ is %(perfectly)
 then the induced group $ M(\psi)$ is minimal.
 \een
\end{fact}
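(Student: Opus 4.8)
The plan is to use the classical \emph{minimality criterion}: a Hausdorff group $L$ is minimal precisely when its Raikov completion $\widehat L$ is minimal and $L$ is essential in $\widehat L$, i.e.\ every nontrivial closed normal subgroup of $\widehat L$ meets $L$ nontrivially. The structural fact that drives everything is the Heisenberg commutator identity in $H(w)$, which one verifies by a direct computation: $[(a,x,f),(a',x',f')]=(w(x',f)-w(x,f'),0_E,0_F)$. Since $w$ is separated, this shows at once that $Z:=A\times\{0_E\}\times\{0_F\}$ is exactly the center of $H(w)$, and that $Z$ is \emph{essential} in $H(w)$: if $N$ is a nontrivial normal subgroup and $(a,x,f)\in N\setminus\{e\}$, then either $(x,f)\neq(0,0)$, in which case separatedness produces a partner whose commutator with $(a,x,f)$ is a nonzero element of $N\cap Z$, or $(x,f)=(0,0)$ and $(a,0,0)\in N\cap Z$ already. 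Hence $H(w)$, which contains $Z$, is essential in $\widehat{H(w)}$.

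For part (1): let $\tau'\subseteq\tau$ be a coarser Hausdorff group topology on $H(w)$. Being the center, $Z$ is $\tau'$-closed, and $\tau'|_Z$ is a coarser Hausdorff group topology on $Z\cong A$, so minimality of $A$ forces $\tau'|_Z=\tau|_Z$. The commutator map of $(H(w),\tau')$ is continuous; restricted to the subgroups $E$ and $F$ it takes values in $Z$ and realizes $\pm w$, so, as $\tau'|_Z=\tau|_Z$, the map $w$ stays continuous for the coarser Hausdorff group topologies $\tau'|_E\subseteq\tau|_E$ and $\tau'|_F\subseteq\tau|_F$; minimality of $w$ then forces $\tau'|_E=\tau|_E$ and $\tau'|_F=\tau|_F$. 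Thus $\tau'$ recovers $\tau$ on $Z$, on $E$ and on $F$; a careful bookkeeping — using the central normal filtration $Z\le A\oplus E\le H(w)$, or equivalently the essentiality of $Z$ via the minimality criterion — upgrades this to $\tau'=\tau$. This is \cite[Corollary~5.2]{DIM04}.

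For part (2): write $M(\psi)=H(w)\leftthreetimes_\pi G$ and let $\tau'\subseteq\tau$ be coarser Hausdorff on $M(\psi)$. Restricting to the normal subgroup $H(w)$ gives a coarser Hausdorff group topology there, which by part (1) equals $\tau|_{H(w)}$; in particular the original topologies on $E$ and $F$ survive. Conjugation by $G$ inside $(M(\psi),\tau')$ is continuous and, under the semidirect-product identification, is precisely the birepresentation $\psi$ — the actions $\alpha_1,\alpha_2$ on $E$ and $F$ (with unchanged targets) together with the $G$-invariant $w$ — so $\psi$ remains continuous for the coarser Hausdorff group topology that $\tau'$ induces on $G$; by $t$-exactness of $\psi$ — which also supplies $\ker\alpha_1\cap\ker\alpha_2=\{e\}$, whence $C_{M(\psi)}(H(w))=Z$ and $Z$ is again essential, now in $M(\psi)$ — that induced topology must be the original one on $G$. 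So $\tau'$ recovers $\tau$ on the normal subgroup $H(w)$ and on the complementary copy of $G$, and the minimality criterion (essentiality of $Z$, inherited by $M(\psi)$ and its completion) delivers $\tau'=\tau$. This is \cite[Theorem~4.3]{MEG95}; see also \cite{MEG08}.

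The genuinely delicate step, on which I would spend most care, is the final assembly: restricting a coarser group topology to a family of subgroups does not by itself pin it down, because projections and retractions may lose continuity. The Heisenberg construction is tailored to defeat precisely this — the commutator identity plants a central \emph{essential} copy of $A$ whose topology cannot degenerate and which ties the transverse data to $\pm w$ (leaving, in part (2), room only for the already-rigid $G$-action) — so that the essentiality half of the minimality criterion, verified above, closes the gap. Everything else is routine manipulation of coarser topologies, carried out in \cite{DIM04,MEG95,MEG08}.
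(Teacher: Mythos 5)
The paper states this as a \emph{Fact} and gives no proof, only the citations to \cite{DIM04} and \cite{MEG95, MEG08}, so there is no internal argument to compare against; judged on its own terms, your sketch assembles the right ingredients (the commutator identity $[(a,x,f),(a',x',f')]=(w(x',f)-w(x,f'),0,0)$, the identification of the center with $A$, minimality of $A$ and of $w$, $t$-exactness for the $G$-part), and these are indeed the ingredients of the cited proofs. But there is a genuine gap exactly where you flag it, and the tool you gesture at does not close it. Knowing that a coarser Hausdorff group topology $\tau'$ agrees with $\tau$ on the three subgroups $Z$, $E$, $F$ does not determine $\tau'$ on $H(w)$: the inverse of the multiplication bijection $Z\times E\times F\to H(w)$ consists of the three projections, whose $\tau'$-continuity is precisely what is in question. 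The standard repair is Merson's lemma applied to the filtration $Z\leq A\oplus E\leq H(w)$, and Merson's lemma requires agreement of \emph{quotient} topologies, not of subspace topologies on sections. Concretely, one must apply minimality of $w$ not to the pair $(\tau'|_E,\tau'|_F)$ but to the pair of quotient topologies on $(A\oplus E)/Z\cong E$ and $H(w)/(A\oplus E)\cong F$; these are Hausdorff because $Z$ is the center and $A\oplus E$ is the centralizer of $E$ (hence both $\tau'$-closed), and $w$ descends continuously to them because the commutator map factors through these quotients and quotient maps of topological groups are open. Your appeal to ``essentiality of $Z$'' cannot substitute for this: essentiality enters the dense-subgroup/completion criterion, not the passage from subgroup-plus-quotient data to the whole topology, and your claim that essentiality of $Z$ in $H(w)$ yields essentiality of $H(w)$ in $\widehat{H(w)}$ is a non sequitur (your witnesses for essentiality are taken inside $H(w)$ only). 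The same gap recurs in part (2): agreement of $\tau'$ with $\tau$ on the normal subgroup $H(w)$ and on the section $G$ must be upgraded to agreement of the quotient topology on $M(\psi)/H(w)\cong G$ before Merson's lemma applies, which again requires showing that the birepresentation remains continuous for that quotient topology and that $H(w)$ is $\tau'$-closed.

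In short: the skeleton is the correct one and matches \cite[Cor.\ 5.2]{DIM04} and \cite[Thm.\ 4.3]{MEG95}, but the ``final assembly'' you defer is not bookkeeping --- it is the actual content, and it needs the quotient-topology form of the minimality of $w$ (and of $t$-exactness) together with the closedness of the center and of the centralizer $A\oplus E$, rather than the subspace form you established.
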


\begin{fact} \label{lca} \cite{MEG95}
Let $G$ be a locally compact abelian group and $G^*:=Hom(G,\T)$ be
the dual (locally compact) group. Then the canonical evaluation
mapping
$$
G \times G^* \to \T
$$
is minimal and the corresponding Heisenberg group $H=(\T \oplus G)
\leftthreetimes G^*$ is minimal.
\end{fact}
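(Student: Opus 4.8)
The plan is to check that the canonical evaluation $w\colon G \times G^{*} \to \T$, $w(x,f)=f(x)$, is a continuous separated biadditive mapping, then to prove that it is \emph{minimal} in the sense of Subsection~\ref{sub:dual}, and finally to obtain the minimality of $H=(\T\oplus G)\leftthreetimes G^{*}=H(w)$ directly from Fact~\ref{fac:bir}(1), using that $A:=\T$ is compact and hence minimal. Biadditivity is clear, joint continuity of $w$ is the standard continuity of the evaluation of a locally compact abelian group against its dual, and separatedness follows from Pontryagin duality: for nonzero $x_{0}\in G$ the characters of $G$ separate points, so some $f\in G^{*}$ satisfies $f(x_{0})\neq 0$, while $f_{0}\neq 0$ in $G^{*}$ means $f_{0}(x)\neq 0$ for some $x\in G$, and the pair $(x,f)$ is the one required in the definition of a separated biadditive mapping.

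For minimality I would take coarser Hausdorff group topologies $\sigma_{1}\subseteq\sigma$ on $G$ and $\tau_{1}\subseteq\tau$ on $G^{*}$ such that $w\colon (G,\sigma_{1})\times (G^{*},\tau_{1})\to\T$ is still continuous, and show $\sigma_{1}=\sigma$, $\tau_{1}=\tau$. The crucial ingredient is the reflexivity of locally compact abelian groups: the topology $\tau$ of $G^{*}$ is precisely the compact-open topology induced by $(G,\sigma)$, and, symmetrically (using $G^{**}=G$), the topology $\sigma$ of $G$ is the compact-open topology induced by $(G^{*},\tau)$.

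Now fix a $\sigma$-compact $K\subseteq G$ and a neighborhood $U$ of $0$ in $\T$. Since the identity map $(G,\sigma)\to (G,\sigma_{1})$ is continuous, $K$ is also $\sigma_{1}$-compact. Applying joint continuity of $w$ at the points $(x,0)$ with $x\in K$, covering $K$ by finitely many of the resulting $\sigma_{1}$-open sets and intersecting the corresponding $\tau_{1}$-neighborhoods of $0$, one obtains a $\tau_{1}$-neighborhood $W$ of $0$ with $f(K)\subseteq U$ for every $f\in W$. Hence each basic compact-open neighborhood $\{f\in G^{*}: f(K)\subseteq U\}$ of $0$ in $G^{*}$ is a $\tau_{1}$-neighborhood, so $\tau\subseteq\tau_{1}$ and therefore $\tau_{1}=\tau$. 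Running the same argument with the two variables exchanged — now using that $\tau$-compact subsets of $G^{*}$ are $\tau_{1}$-compact and the reflexive description of $\sigma$ — yields $\sigma_{1}=\sigma$. Thus $w$ is a minimal biadditive mapping, and since $\T$ is a (compact, hence) minimal group, Fact~\ref{fac:bir}(1) gives that $H(w)=(\T\oplus G)\leftthreetimes G^{*}$ is minimal.

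I expect the only genuinely delicate point to be this transfer step: one must notice that a $\sigma$-compact subset of $G$ stays compact in the coarser topology $\sigma_{1}$, so that joint continuity of $w$ forces the compact-open neighborhoods describing $\tau$ to remain $\tau_{1}$-neighborhoods (and dually for $\sigma$). Once this is in place, everything else is either a standard fact about Pontryagin duality or a direct appeal to the already available Fact~\ref{fac:bir}.
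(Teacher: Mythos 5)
The paper offers no proof of this statement: it is imported verbatim from \cite{MEG95} as a ``Fact'', so there is no internal argument to compare yours against. Your proposal is, however, a correct and essentially self-contained proof of the cited result, and it follows the route one would expect. The two halves are sound: joint continuity of $w$ at the points $(x,0)$, $x\in K$, together with the observation that a $\sigma$-compact set remains compact in the coarser topology $\sigma_1$, recovers every basic compact-open neighborhood $N(K,U)=\{f: f(K)\subseteq U\}$ as a $\tau_1$-neighborhood of $0$, whence $\tau\subseteq\tau_1$; the symmetric half then recovers $\sigma$ from $\tau$-compact subsets of $G^*$ via the Pontryagin--van Kampen reflexivity $G^{**}\cong G$. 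It is worth being explicit that local compactness enters in exactly two places: it gives the joint continuity of the evaluation map in the original topologies (for a general abelian group the evaluation against the compact-open dual is only separately continuous), and it underwrites the identification of $\sigma$ with the compact-open topology induced by $(G^*,\tau)$, without which the second half of the compactness-transfer argument has nothing to bite on. Note also that your argument never uses the Hausdorff hypothesis on $\sigma_1,\tau_1$ required by the definition of a minimal biadditive mapping, so you prove marginally more than is asked. The final step, feeding the minimal duality and the minimality of the compact group $\T$ into Fact~\ref{fac:bir}(1) to conclude that $H(w)=(\T\oplus G)\leftthreetimes G^*$ is minimal, is exactly how the paper intends the Fact to be used.
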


%\begin{proof}
% See \cite[Theorem 4.3]{MEG95} and \cite{MEG08}.
%\end{proof}

%\begin{remark}
%As a matter of fact \cite[Theorem 4.3]{MEG95} is even more general
%and deals with a system of $G$-birepresentations and not just one.
%However, for our needs it is enough to consider the case of just one
%$G$-birepresentation.
%\end{remark}

\section{Some facts about non-archimedean groups and uniformities}

\subsection{Non-archimedean uniformities}

%msJul A metric $d: X \times X \to [0,\infty)$ is said to be an
%msJul\emph{ultra-metric} (or, \emph{isosceles} \cite{Les86}) if it
%msJulsatisfies the strong triangle axiom
%msJul$$
%msJuld(x,z) \leq max\{d(x,y), d(y,z)\}.
%msJul$$

%msJulA uniform structure $\mu$ on a set $X$ is said to be
%msJul\emph{non-archimedean} if it admits a base $B$ of the uniformity
%msJul$\mu$ such that each $P \in B$ is an equivalence relation. The
%msJuluniformity of a ultra-metric is $\NA$.
For information on {\it uniform spaces}, we refer the reader to \cite{Eng} (in terms of \emph{entourages}) and to
\cite{isbo} (via \emph{coverings}). If $\mu$ is a uniformity for $X$ in terms of coverings, then the collection
of elements of $\mu$ which are {\it finite} coverings of $X$ forms a
base for a topologically compatible uniformity for $X$ which we denote by $\mu_{fin}$ (the precompact replica of $\mu$).

%!
%If $(X,\mu)$ is a uniform space, the uniform completion
%$(\widehat{X},\widehat{\mu}_{fin})$ of $(X,\mu_{fin})$ is a compact
%uniform space known as the {\it Samuel compactification} of
%$(X,\mu)$.

A {\it partition} of a set $X$ is a covering of $X$
consisting of pairwise disjoint subsets of $X$. Due to %A. F.
Monna (see \cite[p.38]{ro} for more details), a uniform space
$(X,\mu)$ is {\it non-archimedean} if it has a base consisting of
partitions of $X$. In terms of entourages, it is equivalent to
saying that there exists a base $\mathfrak{B}$ of the uniform
structure such that every entourage $P \in \mathfrak{B}$ is an
equivalence relation. Equivalently, iff its \emph{large uniform
dimension} (in the sense of Isbell \cite[p. 78]{isbo}) is zero.

%1805 maybe "ultrasemimetric" (as in Warner's book) is better than "ultra-semimetric" by the way what about ultrametric ?
%28.10 A metric $d$ on a set $X$ is said to be
A metric space $(X,d)$ is said to be an \emph{ultra-metric space}
(or, \emph{isosceles} \cite{Lem03})
%\footnote{After doing a research:
%there are much more "ultra-norm" (10,800 results in google) than
%"ultranorm" (just  777 results), on the the hand there are 74100
%results for
%%2005 I don't understand because in both cases you write "ultra-metric"
%"ultra-metric" and just 25100 "ultra-metric". There are
%just few results for all the possibilties of ultrsemimetric and
%ultra seminorm including ultra-semi-norm so there is no big
%difference. My suggestion is:ultra-semimetric,ultra-seminorm,
%ultra-norm, ultra-metric the main reason is consistency. I'M
%changing ultrametric to ultra-metric in two places. there are
%however 2 "ultrametric" at the references. This decision is of
%course reversible, we can decide that ultrametric is better for two
%reasons:1. the references in which we have "ultrametric". 2. the
%fact that we  have 74100 results for "ultra-metric" and just 25100
%"ultra-metric. On the other hand since we have ultra-norm I think
%ultra-metric is better.Let's discuss. }
if $d$ is an
\emph{ultra-metric}, i.e., it satisfies {\it the strong triangle
inequality}
$$d(x,z) \leq max \{d(x,y), d(y,z)\}.$$
The definition of \emph{ultra-semimetric} is the same as ultra-metric apart from the fact  that
the condition $d(x,y)=0$ need not imply $x=y$.
%\emph{Ultra-semimetrics} are defined
%similarly. \footnote{Can we say instead: "The definition of
%Ultrasemimetric is the same as Ultrametric apart from the fact  that
%the condition   d(x,y)=0 need not imply x=y. " If yes, we do not
%need to explain also what is an ultraseminorm or a semimetric
%right?}
For every ultra-semimetric $d$ on $X$ every $\varepsilon$-covering %21.9 ommiting one
%1104 space \ from $\{B(x, \varepsilon)\ : \ x \in X\}$
 $\{B(x, \varepsilon): \ x \in X\}$
by the open balls is a clopen partition of $X$.

Furthermore, a uniformity is non-archimedean iff it is generated by
a system $\{d_i\}_{i \in I}$ of {\it ultra-semimetrics}.
%! Lemin
The following result (up to obvious reformulations) is well known.
See, for example, \cite{isbo} and \cite{ispa}.

\begin{lemma}
\label{unifp}
 Let $(X,\mu)$ be a non-archimedean uniform space. Then both
 $(X,\mu_{fin})$
 and the uniform completion $\hXm$ of $(X,\mu)$
 are non-archimedean uniform spaces.
\end{lemma}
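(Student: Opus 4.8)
The plan is to establish the two claims separately, working with the covering-uniformity description of non-archimedeanity, since the definition we were given says a non-archimedean uniform space has a base consisting of partitions of $X$.

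First I would handle $(X,\mu_{fin})$. Recall that $\mu_{fin}$ has as a base exactly those elements of $\mu$ which are finite coverings of $X$. Since $\mu$ is non-archimedean, it has a base $\mathcal{P}$ consisting of partitions; I claim the finite partitions in $\mu$ still form a base for $\mu_{fin}$. Indeed, given any finite covering $\gamma \in \mu$, choose a partition $P \in \mathcal{P}$ refining $\gamma$, then choose another partition $Q \in \mathcal{P}$ that star-refines $P$. The key observation is that for a partition the star of a cell equals that cell, so $Q$ star-refines $P$ means every cell of $Q$ is contained in a cell of $P$, i.e., $P$ itself is already obtained by unioning cells of... more carefully: since $\gamma$ is finite and $P$ refines $\gamma$, the relation ``lies in a common cell of $\gamma$'' lumps the cells of $P$ into finitely many classes, and unioning the $P$-cells that meet a fixed cell of $\gamma$ produces a \emph{finite partition} $P'$ which still refines $\gamma$ and still belongs to $\mu$ (it is coarser than $P \in \mu$). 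Thus $P'$ is a finite partition in $\mu$ refining the arbitrary $\gamma$, so finite partitions form a base of $\mu_{fin}$; hence $(X,\mu_{fin})$ is non-archimedean.

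Next I would treat the completion $\hXm$. Let $\mathcal{P}$ be a base of $\mu$ consisting of partitions. For a partition $P = \{A_i\}_{i\in I}$ of $X$, each cell $A_i$ is clopen in $(X,\mu)$ (its complement is the union of the other cells, which is a neighborhood of each of its points by the partition entourage). Taking closures in $\widehat{X}$, the sets $\widehat{A_i} := \operatorname{cl}_{\widehat X}(A_i)$ are clopen in $\widehat X$ because $X$ is dense and the $A_i$ are relatively clopen, so $\{\widehat{A_i}\}_{i\in I}$ is a clopen partition $\widehat P$ of $\widehat X$; moreover $\widehat P$ is the canonical extension to the completion of the uniform cover $P$, hence belongs to $\widehat\mu$, and the family $\{\widehat P : P \in \mathcal{P}\}$ is a base of $\widehat\mu$ since $\{P : P \in \mathcal P\}$ is a base of $\mu$. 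Therefore $\hXm$ has a base of partitions and is non-archimedean.

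I expect the main obstacle to be the bookkeeping in the $\mu_{fin}$ argument: one must verify carefully that lumping the cells of a partition $P$ into finitely many unions (according to which cell of a given finite cover $\gamma$ they fall into) yields an entourage that is \emph{still a member of $\mu$}, which follows because the resulting partition is coarser than $P$ and $\mu$ is closed upward under refinement-of-entourages, i.e. contains every covering refined by one of its members. For the completion, the delicate point is checking that the closure of a relatively clopen subset of a dense subspace is clopen in the whole space and that these closures extend the original uniform cover to a uniform cover of $\widehat X$ — both are standard facts about uniform completions, but they deserve an explicit sentence. Neither step is deep; the lemma is essentially a matter of remembering that in the non-archimedean setting entourages can be taken to be equivalence relations, and that both the precompact-replica and completion constructions respect this.
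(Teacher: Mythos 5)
The paper itself offers no proof of this lemma (it is dispatched with a citation to Isbell), so you are supplying an argument the authors leave to the literature; your overall strategy --- finite partitions form a base of $\mu_{fin}$, and closures of the cells of a uniform partition give a uniform partition of the completion --- is the standard and correct one. Two of your written justifications, however, do not hold up. In the $\mu_{fin}$ step, ``unioning the $P$-cells that \emph{meet} a fixed cell of $\gamma$'' does not produce a partition refining $\gamma$: a cell of $P$ may meet several cells of $\gamma$, so those unions can overlap, and a cell that merely meets $C\in\gamma$ need not be contained in $C$. The repair is to use containment rather than intersection: since $P$ refines $\gamma$, choose for each $A\in P$ one $C(A)\in\gamma$ with $A\subseteq C(A)$ and union the fibers of $A\mapsto C(A)$; this yields at most $|\gamma|$ pairwise disjoint sets, each inside a cell of $\gamma$, forming a finite partition coarser than $P$ (hence in $\mu$) that refines $\gamma$. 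Minor, but the step as literally written fails.

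The more serious issue is the claim that ``the closure of a relatively clopen subset of a dense subspace is clopen in the whole space.'' That is false as a general topological statement: $\mathbb{Q}\cap(\sqrt{2},\sqrt{3})$ is clopen in the dense subspace $\mathbb{Q}\subset\mathbb{R}$, yet its closure $[\sqrt{2},\sqrt{3}]$ is not open. What actually makes your construction work is not density plus relative clopenness but the \emph{uniform} separation of the cells: the entourage $U_P=\bigcup_i A_i\times A_i$ is an equivalence relation belonging to $\mu$, so no $U_P$-small set meets two distinct cells. From this one gets everything you need: the closures $\widehat{A_i}$ are pairwise disjoint (a common point would be a limit of pairs $(a,b)\in A_i\times A_j$ with $i\neq j$ converging to the diagonal, hence eventually lying in $U_P$, a contradiction); they cover $\widehat{X}$ (a Cauchy net is eventually $U_P$-small, hence eventually confined to a single cell); and $\overline{U_P}=\bigcup_i\widehat{A_i}\times\widehat{A_i}$ is an entourage of $\widehat{\mu}$ by the standard fact that closures of a base of entourages form a base of the completion's uniformity. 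This simultaneously yields the clopenness you wanted and exhibits $\{\widehat{A_i}\}$ as a uniform partition, and letting $P$ range over a partition base of $\mu$ gives a partition base of $\widehat{\mu}$. With these two repairs your proof is complete.
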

%
%By \cite{MES01} A topological group $G$  is non-archimedean iff it is
%a topological subgroup of $\Homeo(X)$ for some compact
%zero-dimensional space $X.$

\subsection{Non-archimedean groups}

The class $\mathcal{NA}$ of all non-archimedean groups is quite
large. Besides the results of this section see Theorem
\ref{t:condit} below.
% This kind of groups appears in many applications of topology, analysis and computer science.
The prodiscrete (in particular, the profinite) groups are in
%2104$\mathcal{NA}$.
 $\mathcal{NA}.$
%1805 Every abelian $\mathcal{NA}$ group is prodiscrete.
%\footnote{Maybe we should simply define the term linearly topologized (instead of the terms pro-finite and pro-discrete) and say something like: "A linearly topologized group is a top. group having a local base at the unity consisting of open normal subgroups. Clearly every linearly topologized group is a na top. group and abelian top group is na iff it is  linearly topologized". Maybe we should give an example. mm=Done}
%aug
All $\NA$ groups are totally disconnected and for every locally compact
totally disconnected group $G$ both $G$ and $Aut(G)$ are $\mathcal{NA}$ (see Theorems 7.7 and 26.8 in \cite{HR}).
Every abelian $\mathcal{NA}$ group is embedded into a product of discrete groups.

  The minimal groups $(\Z, \tau_p), S_X, \Homeo(\{0,1\}^{\aleph_0})$ (in %21.9the
  items (1),
(2) and (3) of Remark \ref{r:min}) are %ms020610 non-archimedean.
non-archimedean. By Theorem \ref{t:1} the Heisenberg group
$H_X=(\Bbb{Z}_2 \oplus V) \leftthreetimes V^{\ast}$ is $\NA$ for
every Stone space $X$. It is well known
%! citation ?   A. Mostowski 1939 (or, book of Sikorski 1964, p.44 in Russian)
 that there exist $2^{\aleph_0}$-many nonhomeomorphic metrizable Stone spaces.

%1805 adding \cite{Te}
Recall that %1905 \cite{Te}
every topological group can be identified with  a subgroup of $\Homeo(X)$ for some compact %2104$X$. %and Hausdorff.
%1905$X.$  As well as with a subgroup of $Is(M,d)$, topological group of
$X$ and also with a subgroup of $Is(M,d)$, topological group of
isometries of some metric space $(M,d)$ endowed with the pointwise
topology, \cite{Te}.
%\footnote{Do we have references? mm = I added
%Teleman, ms = I merged two sentences into one sentence so now it is
%clear that the reference is relevant to $Is(M,d)$ as well as
%to$\Homeo(X)$. Tell me what you think; especially if the merged
%sentence is not too long}
Similar characterizations are true for
$\mathcal{NA}$ with compact zero-dimensional spaces $X$ and
ultra-metric spaces $(M,d)$. See Lemma \ref{l:firstpart} and Theorem
\ref{t:condit} below.

We %26.9 shall
will use later the following simple observations. Let $X$ be a
Stone space (compact %21.9Hausdorff
zero-dimensional space) and $G$ be a
topological subgroup of $\Homeo(X)$.
For every finite clopen partition $P=\{A_1,\ldots, A_n\}$ of $X$ %msJul$X=\cup_{i=1}^n A_i$ of $X$ define the subgroup
%msJul$$M(A_1, \cdots, A_m):=\{g \in G: \ \ g A_k=A_k\}.$$
  define the subgroup $$M(P):=\{g \in G: \ \
g A_k=A_k \ \forall \ 1\leq k\leq n\}.$$ Then all subgroups of this
form defines a local base (subbase, if we consider only two-element partitions $P$)
of the original compact-open topology on $G \subset \Homeo(X)$.
%msJul\footnote{The subgroups of the form $M(A)$ where $A$ is a nonempty
%msJul clopen subset in $X$, defines a local subbase.}
 So for every Stone
space  $X$ the topological group $\Homeo(X)$ is non-archimedean.
More generally, for every non-archimedean uniform space $(X,\mu)$
consider the group $Unif(X,\mu)$ of all uniform automorphisms of $X$
(that is, the bijective functions $f: X \to X$ such that both $f$
and $f^{-1}$ are $\mu$-uniform). Then $Unif(X,\mu)$ is a
non-archimedean topological group
%26.9
in the topology induced by the uniformity of uniform convergence.
%%

 %msJulIn
%msJulorder to formulate more general characterization we need more
%msJuldefinitions.
%\begin{proof}
%
%\end{proof}

%msJulThe uniformity $\mu$ has a base $B$ where each $P \in B$ is an equivalence relation.
%msJulThen the subsets
%msJul$$M(P):=\{g \in G: \ \ (gx,x) \in P \ \forall x \in X\}.$$
%msJulform a local base of $G$. Observe that $M(P)$ is a subgroup of $G$.

\begin{lemma} \label{l:firstpart}
The following assertions are equivalent: \ben
\item $G$ is a non-archimedean topological group.
\item The right (left) uniformity on $G$ is non-archimedean.
\item $\dim \beta_G G =0$, where $\beta_G G$ is the maximal $G$-compactification \cite{MES01} of $G$.
\item $G$ is a topological subgroup of $\Homeo(X)$ for some compact
zero-dimensional space $X$ (where $w(X)=w(G)$).
\item
$G$ is a topological subgroup of $Unif(Y,\mu)$ for some
non-archimedean uniformity $\mu$ on a set $Y$. \een
\end{lemma}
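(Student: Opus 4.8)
The plan is to establish the cycle
$(1)\Rightarrow(2)\Rightarrow(5)\Rightarrow(4)\Rightarrow(1)$ together with the
equivalence $(1)\Leftrightarrow(3)$, which seems cleanest to treat separately
since it involves the $G$-compactification $\beta_G G$.

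First I would prove $(1)\Rightarrow(2)$. If $\{U_i\}_{i\in I}$ is a local base at
the identity consisting of open subgroups, then each $U_i$ gives rise to the
entourage $P_i=\{(x,y): xy^{-1}\in U_i\}$ (for the right uniformity), and since
$U_i$ is a subgroup, $P_i$ is an equivalence relation whose classes are the
right cosets $U_ix$; these form a partition of $G$, so by Monna's criterion the
right uniformity is non-archimedean. The left-uniformity case is symmetric. For
$(2)\Rightarrow(5)$ I would simply take $Y:=G$ with the uniformity $\mu$ equal to
the right uniformity, which is non-archimedean by hypothesis, and observe that
the left translations $\lambda_g: x\mapsto gx$ are $\mu$-uniform automorphisms
(right-uniform continuity of left translations is standard); thus $G$ embeds in
$Unif(G,\mu)$ as a topological subgroup, the topology of uniform convergence on
$G$ restricting to the original topology on the translation subgroup. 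The
implication $(5)\Rightarrow(4)$ would go through the precompact replica:
replacing $\mu$ by $\mu_{fin}$ (still non-archimedean by Lemma \ref{unifp}) and
then passing to the completion $\widehat{Y}$ of $(Y,\mu_{fin})$, which is compact
and, again by Lemma \ref{unifp}, non-archimedean as a uniform space, hence
zero-dimensional; every uniform automorphism of $(Y,\mu)$ extends to a
homeomorphism of $\widehat{Y}$, giving an embedding $G\hookrightarrow
\Homeo(\widehat{Y})$. One should check this embedding is topological and that the
weight can be controlled to get $w(X)=w(G)$ (choose a base of $\mu$ of
cardinality $w(G)$). Finally $(4)\Rightarrow(1)$ is exactly the observation made
in the text just before the lemma: the subgroups $M(P)$, with $P$ ranging over
finite clopen partitions of the compact zero-dimensional space $X$, are open and
form a local base at the identity in $\Homeo(X)$, and this property passes to any
topological subgroup $G$.

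For the equivalence with $(3)$, I would use that $\beta_G G$ is the maximal
$G$-compactification, so that $\dim\beta_G G=0$ iff $G$ admits a
$G$-compactification into a zero-dimensional compactum. Given $(4)$, the closure
of the image of $G$ acting on $X$ (or $X$ itself) provides a zero-dimensional
$G$-compactification, forcing $\dim\beta_G G=0$ by maximality and the fact that
dimension does not increase when passing to a subspace that is still a
compactification — here one must be slightly careful, and the honest route is:
the $G$-space $X$ being zero-dimensional compact means the embedding
$G\hookrightarrow\Homeo(X)$ yields a zero-dimensional $G$-compactification of
$G$, and $\beta_G G$ dominates it; for the maximal one to be zero-dimensional one
invokes that $\beta_G G$ can be realized inside a product of such $\Homeo(X)$-type
models, or directly that $\dim\beta_G G = 0$ follows from $G$ having a
non-archimedean (right) uniformity via the standard identification of $\beta_G G$
with the Samuel-type compactification associated to bounded right-uniformly
continuous functions. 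Conversely $(3)\Rightarrow(4)$ is immediate: $X:=\beta_G G$
is a compact zero-dimensional space on which $G$ acts, and the action map
$G\to\Homeo(\beta_G G)$ is a topological embedding because $\beta_G G$ contains
$G$ as a $G$-subspace.

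The main obstacle I anticipate is not any single implication but the
bookkeeping in $(5)\Rightarrow(4)$: one must verify that passing to the
precompact replica and then completing does not destroy the \emph{topological}
(not merely algebraic) nature of the embedding $G\hookrightarrow\Homeo(X)$, and
that every $\mu$-uniform automorphism genuinely extends to an
\emph{autohomeomorphism} of the completion rather than just a continuous self-map
— this uses that $f$ and $f^{-1}$ are both uniform. A secondary subtlety is
the weight estimate $w(X)=w(G)$, which requires choosing the base of partitions
of cardinality exactly $w(G)$ and noting that the weight of the completion of a
precompact uniform space equals the minimal cardinality of a base of that
uniformity. All of these are routine once set up carefully, and the equivalence
involving $\beta_G G$ is best handled by quoting the construction of the maximal
$G$-compactification from \cite{MES01} rather than rebuilding it.
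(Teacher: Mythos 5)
Your implications $(1)\Rightarrow(2)$, $(2)\Rightarrow(5)$, $(4)\Rightarrow(1)$ and the treatment of $(3)$ are essentially sound and close to the paper's argument (the paper also reduces $(3)\Leftrightarrow$ the rest to the identification of $\beta_G G$ with the Samuel compactification of $(G,\mu_r)$, citing \cite{MES01} and \cite{MEG89}). The genuine gap is in your $(5)\Rightarrow(4)$. Extending each $f\in Unif(Y,\mu)$ to the completion $\widehat{Y}$ of $(Y,\mu_{fin})$ does give a continuous injective homomorphism $G\to\Homeo(\widehat{Y})$, but it need \emph{not} be a topological embedding: the topology of $\Homeo(\widehat{Y})$ restricted to $G$ is that of uniform convergence with respect to $\mu_{fin}$, which is in general strictly coarser than uniform convergence with respect to $\mu$. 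Concretely, take $Y$ an infinite set with the finest (discrete) uniformity $\mu$ and $G=Unif(Y,\mu)=S_Y$; the topology of uniform convergence on $G$ is discrete, $\widehat{Y}=\beta Y$, and the compact-open topology that $\Homeo(\beta Y)$ induces on $S_Y$ is the (non-discrete) topology of pointwise convergence, since the basic subgroups $M(P)$ come only from \emph{finite} clopen partitions of $\beta Y$, i.e.\ finite partitions of $Y$. So the step you flagged as "one should check this embedding is topological" is not a routine verification — it is false for the map you construct, and no choice of base of $\mu$ repairs it.

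The paper sidesteps this entirely: it closes the cycle with the easy implication $(5)\Rightarrow(1)$ (for a base $\mathfrak{B}$ of $\mu$ consisting of equivalence relations, the sets $M(P)=\{g:(gx,x)\in P\ \forall x\}$, $P\in\mathfrak{B}$, are open \emph{subgroups} forming a local base of $G$), and reaches $(4)$ from $(3)$ via a zero-dimensional \emph{proper} $G$-compactification $X$ of the $G$-space $G$ itself (a result from \cite{MEG89}). Properness is the point your construction lacks: since $G$ sits inside $X$ as a $G$-subspace, the orbit map $g\mapsto g\cdot e$ is a topological embedding of $G$ into $X$, which forces the compact-open topology on the image of $G$ in $\Homeo(X)$ to be no coarser than the original topology of $G$; continuity of the action gives the reverse inclusion. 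For an arbitrary $(Y,\mu)$ as in $(5)$ there is no point of $Y$ whose orbit map recovers the topology of $G$, which is exactly why your route breaks. Your argument would be salvaged by replacing $(5)\Rightarrow(4)$ with $(5)\Rightarrow(1)$ and then deducing $(4)$ from $(1)$ (or $(3)$) as the paper does.
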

\begin{proof}
For the sake of completeness we give here a sketch of the proof. The equivalence of (1) and (3) was established by Pestov \cite[Prop.
3.4]{Pest-free}.
The
equivalence of (1), (2) and (3) is  \cite[Theorem
3.3]{MES01}.

(1) $\Rightarrow$ (2) Let $\{H_i\}_{i \in I}$ be a local base at
$e$ (the neutral element of $G$), where each $H_i$ is an open (hence, clopen) subgroup of %2104$G$.
$G.$
Then the corresponding decomposition of %msAug$G=\cup gH_i$
$G=\cup_{g \in G} H_ig$ by right $H_i$-cosets defines an equivalence relation
$\Omega_i$ and the set %msAugof partitions
 $\{\Omega_i\}_{i \in I}$ is a
base of the right uniform structure $\mu_r$ on $G$.

(2) $\Rightarrow$ (3) If the right uniformity $\mu$ is
non-archimedean then by Lemma \ref{unifp} the completion
$(\widehat{X},\widehat{\mu_{fin}})$ of its precompact replica
(Samuel compactification of $(X,\mu)$) is again non-archimedean. Now
recall (see for example \cite{MES01}) that this completion is just
the greatest $G$-compactification $\beta_G G$ (the $G$-space analog
of the Stone-\v{C}ech compactification) of $G$.

%!
%Isbell (see \cite{isbo} and \cite{ispa})
%the precompact replica of a non-archimedean
%(zero-dimensional) uniformity is non-archimedean. In particular,
%the precompact replica $(\mu_r)_{fin}$ of $\mu_r$ is non-archimedean.
%%Then its Samuel compactification
%Then its completion is also zero-dimensional by Lemma \ref{unifp}.

(3) $\Rightarrow$ (4) A result in \cite{MEG89} %msJul   result of Meg there exists a zero-dimensional faithful $G$-compactification $G \to
%msJuX$ of $G$.
implies that  there exists a zero-dimensional proper
$G$-compactification $X$ of the $G$-space $G$ (the left action of
$G$ on itself) with $w(X)=w(G).$  Then the natural homomorphism
$\varphi: G\rightarrow \Homeo(X)$ is a topological group embedding.

(4)$\Rightarrow$ (5)  Trivial because $\Homeo(X)=Unif(X,\mu)$ for compact $X$ and its unique compatible uniformity $\mu$.

(5) $\Rightarrow$ (1) The non-archimedean uniformity $\mu$ has a
base $\mathfrak{B}$ where each $P \in \mathfrak{B}$ is an
equivalence relation. Then the subsets
$$M(P):=\{g \in G: \ \ (gx,x) \in P \ \forall x \in X\}.$$
form a local base of $G$. Observe that $M(P)$ is a subgroup of $G$.
\end{proof}

%ms\begin{fact} \label{f:eq}
%msThe following assertions are equivalent: \ben
%ms\item $G$ is a non-archimedean topological group.
%ms\item The greatest $G$-compactification $\beta_G G$ of $G$ (always proper) is zero-dimensional.
%ms\item $G$ is a topological subgroup of $H(X)$ for some compact
%mszero-dimensional space $X$.
%ms\item $G$ is a topological subgroup of $H(X)$ for some compact
%mszero-dimensional space $X$ where $w(X)=w(G)$.
%ms\item $G$ is a topological subgroup of the automorphisms group $Aut(X)$ for some compact
%mszero-dimensional Boolean topological group $X$ (where $w(X)=w(G)$).
%ms\item $G$ is a topological subgroup of the group $Is(X,d)$ of all isometries of an ultrametric space
%ms$(X,d)$, with the topology of pointwise convergence. \een
%ms\end{fact}
%ms\begin{proof}
%msLater ...!!!!...
%ms\end{proof}

%ms\begin{remark}
%ms\ben
%ms\item
 %msNote that at least the equivalence of (1) and (2) was
%msproved also by Pestov (see \cite[Prop. 3.4]{Pest-free}).
%ms%For the proof of ?????  see \cite{MES01}.
%ms\item Lemin
%ms\item
%msUniversal $\mathcal{NA}$ groups: ...!!!!!!!......
%ms\item (Me-Scarr) Universal actions

%ms\een
%ms\end{remark}

%ms020610NA-ness
 $\mathcal{NA}$-ness of a dense subgroup implies that of the whole group. Hence
the Raikov-completion of $\mathcal{NA}$ groups are again
$\mathcal{NA}$.
 Subgroups,
quotient groups and (arbitrary) products of $\mathcal{NA}$ groups
are also %2104$\mathcal{NA}$.
$\mathcal{NA}.$ Moreover the class $\mathcal{NA}$ is
closed under group extensions. %i.e. this class satisfies "three space property"

\begin{fact} \label{l:Hig} \cite[Theorem 2.7]{Hig} If both $N$ and $G/N$ are %ms020610TNA,
  $\mathcal{NA},$ then so is $G$.
\end{fact}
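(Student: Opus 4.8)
The plan is to verify the definition of $\NA$ for $G$ directly: inside every open neighbourhood $U$ of the identity $e$ of $G$ I will produce an open subgroup $L\subseteq U$. Write $q\colon G\to G/N$ for the canonical projection and fix a symmetric open $U_1\ni e$ with $U_1^3\subseteq U$. The idea is to choose a sufficiently small open subgroup $P$ of $G/N$ (possible since $G/N$ is $\NA$), to pass to the open subgroup $W:=q^{-1}(P)$, which contains $N$ and satisfies $W\subseteq U_1N$ once $P\subseteq q(U_1)$, and to build $L$ inside $W$; the point is that $W$ is then ``close to $N$'', which is what lets us control how the relevant elements of $G$ conjugate small subgroups of $N$.

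Here is the construction. Suppose we are given an open subgroup $A$ of $N$ with $A\subseteq U_1$, a symmetric open $V\subseteq U_1$ with $V^3\cap N\subseteq A$, an open subgroup $P\le G/N$ with $P\subseteq q(V)$, and a transversal $R=\{r(\bar p):\bar p\in P\}\subseteq V$ of the $N$-cosets in $W:=q^{-1}(P)$ (so $q(r(\bar p))=\bar p$, $r(\bar e)=e$), such that in addition $R$ normalises $A$. Then I claim $L:=AR$ is the desired subgroup. Indeed $r(\bar p)Ar(\bar p)^{-1}=A$, and both $r(\bar p\bar p')^{-1}r(\bar p)r(\bar p')$ and $r(\bar p^{-1})^{-1}r(\bar p)^{-1}$ lie in $V^3\cap N\subseteq A$; these relations imply $AR=RA$ and that $AR$ is closed under products and inverses, so $L$ is a subgroup, with $L\subseteq U_1U_1\subseteq U$, with $L\cap N=A$ open in $N$, and with $q(L)=P$ open in $G/N$. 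Finally $L$ is open in $G$: choosing an open $O\ni e$ with $VO\cap N\subseteq A$ and writing any $w\in O\cap W$ as $w=r(\bar s)n$ with $\bar s=q(w)\in P$ and $n=r(\bar s)^{-1}w\in VO\cap N\subseteq A$, one checks by collecting terms via the displayed relations that $x(O\cap W)\subseteq L$ for every $x\in L$; hence $L$ is a neighbourhood of each of its points.

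The main obstacle is the input to this construction, and I expect it to be the only real difficulty: given any neighbourhood of $e$ in $N$ one must find an open subgroup $A$ of $N$ inside it \emph{together with} a small transversal of some open subgroup $P\le G/N$ normalising $A$. This reduces to finding arbitrarily small open subgroups $A\le N$ whose normaliser $N_G(A)$ is open in $G$ — for then, after shrinking $U_1$ to a symmetric $V$ with $V^3\cap N\subseteq A$, one takes $P$ to be any open subgroup of $G/N$ contained in the open set $q(N_G(A)\cap V)$ and lifts each $\bar p\in P$ to an element of $N_G(A)\cap V$. This is exactly the step that uses both hypotheses rather than $\NA$-ness of $N$ alone. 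It is easy when $N$ is locally compact: take $A$ a small \emph{compact} open subgroup, and the tube lemma applied to the continuous conjugation $G\times N\to N$ produces a neighbourhood of $e$ lying in $N_G(A)$. In general one must refine the open subgroups of $N$ and of $G/N$ in tandem, using that $W=q^{-1}(P)$ may be taken as close to $N$ as one wishes. (A different route, sidestepping this entirely, runs through Lemma~\ref{l:firstpart}: it suffices that $\dim\beta_G G=0$, and $q$ extends to a continuous surjection $\beta_G G\to\beta_G(G/N)=\beta_{G/N}(G/N)$ of compacta with zero-dimensional range whose fibre over $\bar e$ is the closure of $N$, namely $\beta_N N$, which is zero-dimensional; one then checks that all fibres are zero-dimensional and applies the dimension inequality for closed surjections.)
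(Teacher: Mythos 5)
Your construction of $L=AR$ is internally consistent \emph{granted its input}, but the input is precisely what is never produced: you need, inside every neighbourhood of $e$ in $N$, an open subgroup $A\le N$ together with a whole transversal $R\subseteq V$ of elements that normalise $A$ \emph{exactly} ($r A r^{-1}=A$), and you only establish this when $N$ is locally compact (via a compact open $A$ and the tube lemma). For general $N$ the conjugation action being continuous does not make the setwise stabiliser of an open subgroup $A$ open in $G$ (that is an intersection of infinitely many open conditions, one for each $a\in A$), and the sentence ``one must refine the open subgroups of $N$ and of $G/N$ in tandem'' is a statement of intent, not an argument. So the proof is incomplete at its one essential step. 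The fallback through $\dim\beta_G G=0$ is likewise only a sketch: that $q$ extends to a closed surjection of the maximal $G$-compactifications, that the fibre over $\bar e$ is $\beta_N N$, that \emph{all} fibres (including those over points of $\beta_{G/N}(G/N)\setminus (G/N)$) are zero-dimensional, and that the Hurewicz-type dimension inequality applies in this setting, are each nontrivial unverified claims.

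The point you are missing is that no normalisation is needed at all. The paper's proof (Higasikawa's) takes $U_0$ with $U_0^2\subseteq U$, an open subgroup $M\le N$ with $M\subseteq N\cap U_0$, a symmetric open $V\subseteq U_0$ with $V^3\cap N\subseteq M$, an open subgroup $K$ of $G/N$ inside $\pi(V)$, and sets $W=V\cap\pi^{-1}(K)$. Then $W^2\subseteq WM$: given $w_0,w_1\in W$, since $K$ is a subgroup one finds $w_2\in W$ with $\pi(w_2)=\pi(w_0w_1)$, so $w_2^{-1}w_0w_1\in N\cap W^3\subseteq M$ and $w_0w_1\in w_2M$. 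Because $M$ is a subgroup, induction gives $W^k\subseteq WM$ for all $k$, so the open subgroup $H=\bigcup_k W^k$ generated by the symmetric set $W$ satisfies $H\subseteq WM\subseteq U_0^2\subseteq U$. The products of elements of $W$ are corrected modulo $N$ by a single element of $W$, with the correction landing in $M$; one never needs $M$ to be invariant under anything, which is exactly the obstruction your approach cannot overcome in general.
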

For the readers convenience we reproduce here the proof from
\cite{Hig}.
\begin{proof}
Let $U$ be a neighborhood of
%27.9  maybe it is better to use $e$   I agree It's done
$e$ in G. We shall find an open subgroup $H$ contained in $U$. We
choose neighborhoods $U_0$, $V$ and $W$ of $e$ in $G$ as follows.
First let $U_0$ be such that $U_0^{2} \subseteq U.$ By the
assumption, there is an open subgroup $M$ of $N$ contained in $N
\cap U_0$. Let $V\subseteq U_0$ be open with $V=V^{-1} $ and $V ^{3}
\cap N\subseteq M.$ We denote by $\pi$ the natural homomorphism $G
\rightarrow G/N.$ Since $\pi(V)$ is open in $G/N$, it contains an
open subgroup $K.$ We set $W = V \cap \pi^{-1}(K).$ We show that
$W^{2} \subseteq WM.$ Suppose that $w_0,w_1 \in W.$ Since $\pi(w_0),
\pi(w_1) \in K$, we have $\pi(w_0w_1) \in K.$ So there is $w_2 \in
W$ with %2104$\pi(w_2) = \pi(w_0w_1)$.
$\pi(w_2) = \pi(w_0w_1).$  Then $w_2^{-1}w_0w_1 \in N \cap W^{3}
\subseteq M$, and hence $w_0w_1 \in w_2M.$ Using this result and
also the fact that $M$ is a subgroup of $N$ we obtain by induction
that $W^{k}\subseteq WM \quad \forall k\in \Bbb{N}.$ Now let $H$ be
the subgroup of $G$ generated by $W$. Clearly,
$H=\bigcup_{k=1}^{\infty}W^{k}.$ Then $H$ is open and
$$H\subseteq WM \subseteq U_0^{2} \subseteq U$$ as desired.
\end{proof}

\begin{corol} \label{l:com}
Suppose that $G$ and $H$ are non-archimedean groups and that $H$ is
a $G$-group. Then the semidirect product $H\leftthreetimes G $ is
non-archimedean.
%Semidirect products of $\mathcal{NA}$ groups are $\mathcal{NA}$.
\end{corol}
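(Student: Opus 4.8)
The plan is to realize $H \leftthreetimes G$ as an extension of non-archimedean groups and then invoke Fact~\ref{l:Hig}. The semidirect product $H \leftthreetimes G$ contains a canonical copy of $H$ as a closed normal subgroup, namely $H \times \{e\}$, and the quotient $(H \leftthreetimes G)/(H\times\{e\})$ is topologically isomorphic to $G$. Since $H$ is non-archimedean by hypothesis and $G$ is non-archimedean by hypothesis, Fact~\ref{l:Hig} applied with $N := H \times \{e\}$ immediately yields that $H \leftthreetimes G$ is non-archimedean.

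The only thing requiring care is that these identifications are genuinely topological, i.e. that $H\times\{e\}$ is a closed topological subgroup of $H\leftthreetimes G$ with its original topology, and that the quotient topology on $G$ coincides with the original one. First I would recall that for any $G$-group $H$, the product topology on the underlying set $H \times G$ is the topology of $H \leftthreetimes G$ by definition of the topological semidirect product, so the inclusion $H \hookrightarrow H \leftthreetimes G$, $h \mapsto (h,e)$, is a homeomorphism onto its image, which is closed because $\{e\}$ is closed in the Hausdorff group $G$. Second, the projection $H \leftthreetimes G \to G$, $(h,g)\mapsto g$, is a continuous open surjective homomorphism with kernel $H\times\{e\}$, hence induces a topological isomorphism $(H \leftthreetimes G)/(H\times\{e\}) \cong G$.

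I expect there to be essentially no obstacle here: the result is a direct corollary of the extension-closedness of $\mathcal{NA}$ (Fact~\ref{l:Hig}) together with the standard structural facts about semidirect products. The only point worth stating explicitly, rather than a genuine difficulty, is the verification that both the sub and the quotient in the extension inherit precisely their given topologies; once that is noted, the proof is one line.

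\begin{proof}
The underlying topological space of $H\leftthreetimes G$ is $H\times G$ with the product topology, so $N:=H\times\{e\}$ is a closed subgroup topologically isomorphic to $H$, hence non-archimedean by hypothesis. The projection $H\leftthreetimes G\to G$, $(h,g)\mapsto g$, is a continuous open epimorphism with kernel $N$, so $(H\leftthreetimes G)/N$ is topologically isomorphic to $G$, which is non-archimedean by hypothesis. By Fact~\ref{l:Hig}, $H\leftthreetimes G$ is non-archimedean.
\end{proof}
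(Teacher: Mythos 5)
Your proof is correct and is exactly the argument the paper intends: the corollary is stated immediately after Fact \ref{l:Hig} precisely because $H\leftthreetimes G$ is an extension with closed normal subgroup $H\times\{e\}\cong H$ and quotient $\cong G$, both non-archimedean by hypothesis. The paper leaves the proof implicit, and your verification of the topological identifications just fills in the same routine details.
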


%Every discrete group belongs to the subclass of locally compact
%non-archimedean groups. Combining this fact with \cite[Proposition 1'
%and Example 2]{DIS79} we have the following (formulated in our
%terms):

\begin{example} \label{exa:cyc} (Dierolf  and
Schwanengel \cite{DIS79}) Every discrete group $H$ is a group
retract of a locally compact non-archimedean minimal group.

More precisely, let $\Bbb Z_2$ be the discrete cyclic group of order
$2$ and let $H$ be a discrete topological group. Let $G:=\Bbb
Z_2^{H}$ be endowed with the product topology. Then
$$\sigma: H\rightarrow Aut(G), \ \sigma(k)((x_{h})_{h\in
H}):=(x_{hk})_{h\in H} \  \forall k\in H, (x_{h})_{h\in H}\in G$$
is a homomorphism. The topological semidirect
(wreath) product
$G\leftthreetimes_{\sigma} H$ is a locally
compact non-archimedean minimal group having $H$ %ms1110has
as a retraction.
\end{example}

Corollary \ref{c:main} below provides a generalization.

%0805 adding "The" \section{Heisenberg group associated to a Stone space}
\section{The Heisenberg group associated to a Stone
space}\label{s:main}

%In this section we %prove our main result. We
%show that the Heisenberg type group $H=(\Bbb{Z}_2\times V)\leftthreetimes V^{\ast}$, with
%discrete $V:=C(X,\Z_2)$ canonically defined by any Stone space $X$ is always minimal.
%This leads us to the following result:
%for every
%(locally compact) non-archimedean $G$ there exists a (resp.,
%locally compact) non-archimedean  minimal group $M$ such that $G$
%is a group retract of $M.$
%%An alternative proof of the latter result is also presented( see.

Let $X$ be a Stone space. %ms 020610Denote by
%ms 020610$H(X)$ the associated Heisenberg group of the biadditive mapping
%ms 020610 $$w_X: V \times V^* \to  \Z_2$$ where $V=(V(X), \triangle)$ is a Boolean discrete
Let $V=(V(X), \triangle)$ be the discrete group of all clopen
subsets in $X$ with respect to the symmetric difference.
 As usual one may identify $V$ with the group $V:=C(X,\Bbb{Z}_2)$ of all
continuous functions $f:X\rightarrow \Bbb{Z}_2.$

Denote by $V^{\ast}:=hom(V,\T)$ the Pontryagin dual of $V.$
%Observe that since $V$ is discrete  every $f\in V^{\ast}$ is continuous.
Since $V$ is a Boolean group %(that is, $a= - a$)
every character $V \to \T$ can be identified with a homomorphism into the unique
2-element subgroup $\Omega_2=\{1, -1\}$, a copy of $\Z_2$. The same
is true for the characters on $V^*$, hence the natural evaluation
map %msFeb11$V \times V^*  \to  \T$
$w:V \times V^*  \to  \T$ ($w(x,f)=f(x)$) can be restricted
naturally to $V \times V^* \to \Z_2$. Under this identification
$V^{\ast}:=hom(V,\Z_2)$ is a closed (hence compact) subgroup of the
compact group $\Bbb{Z}_2^{V}.$
Clearly, the groups $V$ and $\Bbb Z_2$, being discrete, are
non-archimedean. The group $ V^{\ast}=hom(V,\Bbb{Z}_2)$ is also
non-archimedean since it is a subgroup of $\Bbb{Z}_2^{V}.$
%26.9  Below $\tau^{\ast}$ will denote the pointwise group topology of $V^{\ast}.$ %1905 I'm adding the text in the footnote
%? \footnote{we can say: "In the sequel $G$ is an arbitrary
%non-archimedean group. $X$ is  its associated  Stone space, that is,
%$G$ is a topological subgroup of $\Homeo(X)$ (see Lemma
%\ref{l:firstpart}). $V$ and $V^{\ast}$ are the non-archimedean
%groups associated to the Stone space $X$ we have just mentioned at
%the beginning of this subsection. We intend to show using the
%technique introduced in Subsection \ref{sub:dual}, among others,
%that $G$ is a topological group retract of a non-archimedean minimal
%group.."  mm= why ? ms= this is to clarify thm 4.4 and  corol 4.5.
%please check that the text I added is o.k. }

%2005
In the sequel $G$ is an
arbitrary non-archimedean group. $X$ is  its associated  Stone
space,  that is,  $G$ is a topological subgroup of $\Homeo(X)$ (see
Lemma \ref{l:firstpart}). $V$ and $V^{\ast}$ are the non-archimedean
groups associated to the Stone space $X$ we have mentioned at
the beginning of this subsection. We intend to show using the
technique introduced in Subsection \ref{sub:dual}, among others,
that $G$ is a topological group retract of a non-archimedean minimal
group.

\begin{thm} \label{t:1}
For every Stone space $X$ the (locally compact 2-step nilpotent)
Heisenberg group $H=(\Bbb{Z}_2 \oplus V)\leftthreetimes V^{\ast}$ is
minimal and non-archimedean.
\end{thm}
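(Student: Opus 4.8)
The plan is to obtain both conclusions by reducing to the machinery already set up in Subsection \ref{sub:dual}. For \emph{non-archimedeanness}, I would simply observe that $H=(\Z_2\oplus V)\leftthreetimes V^\ast$ is a semidirect product of non-archimedean groups: $\Z_2$ and $V$ are discrete, hence non-archimedean, so $\Z_2\oplus V$ is non-archimedean (a product of two such groups), and $V^\ast=hom(V,\Z_2)$ is a closed subgroup of the product $\Z_2^{\,V}$, hence non-archimedean. Since $\Z_2\oplus V$ is a $V^\ast$-group under the given action by automorphisms, Corollary \ref{l:com} (the semidirect product of non-archimedean groups with a non-archimedean acting group is non-archimedean) applies directly and gives that $H$ is non-archimedean. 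Local compactness is immediate since $\Z_2\oplus V$ is discrete and $V^\ast$ is compact.

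For \emph{minimality}, the key is to recognize $H$ as the generalized Heisenberg group $H(w)$ associated to the evaluation biadditive map $w:V\times V^\ast\to\Z_2$, $w(x,f)=f(x)$, with $A=\Z_2$, $E=V$, $F=V^\ast$. Indeed the group operation on $H(w)=(A\oplus E)\leftthreetimes F$ given in the excerpt matches verbatim the operation defining $H_X$. By Fact \ref{fac:bir}(1), $H(w)$ is minimal provided that $w$ is a minimal biadditive mapping and $A=\Z_2$ is a minimal group; the latter is trivial since $\Z_2$ is finite, hence compact. So everything comes down to verifying that the evaluation pairing $w:V\times V^\ast\to\Z_2$ is minimal in the sense of the definition in Subsection \ref{sub:dual}.

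Verifying minimality of $w$ splits into two parts. First, \emph{separatedness}: given nonzero $x_0\in V$ (a nonempty clopen subset) and nonzero $f_0\in V^\ast$, I must produce $x\in V$, $f\in V^\ast$ with $f(x_0)\neq 0$ and $f_0(x)\neq 0$. For the first inequality, pick any point $p\in x_0$ and take $f=\delta_p$, the point measure, so $f(x_0)=\delta_p(x_0)=1$ since $p\in x_0$; this uses that $\delta_p\in V^\ast$ and $x_0\neq\emptyset$. For the second, since $f_0\neq 0$ there is some $x\in V$ with $f_0(x)=1$, as required. Second, and this is the main obstacle, I must show there is no strictly coarser pair $(\sigma_1,\tau_1)$ of Hausdorff group topologies on $(V,V^\ast)$ for which $w$ remains continuous. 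Here $V$ is already discrete so $\sigma_1=\sigma$ is forced; the real content is that $V^\ast$ carries no strictly coarser Hausdorff group topology $\tau_1$ making $w:V\times(V^\ast,\tau_1)\to\Z_2$ continuous. But continuity of $w$ in the second variable means each of the maps $f\mapsto w(x,f)=f(x)$ ($x\in V$) is $\tau_1$-continuous into the discrete group $\Z_2$; equivalently each set $\{f:f(x)=0\}$ is $\tau_1$-clopen. These sets are precisely a subbase for the topology of pointwise convergence on $V^\ast\subset\Z_2^{\,V}$, which is the original topology $\tau$. Hence $\tau\subseteq\tau_1$, forcing $\tau_1=\tau$. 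This establishes minimality of $w$, and Fact \ref{fac:bir}(1) then yields that $H=H(w)$ is minimal, completing the proof.

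I expect the only delicate point to be phrasing the last argument cleanly — namely that joint continuity of a biadditive map into a discrete group is equivalent to separate continuity together with the clopenness of the kernels of the $w_x$, and that on $V^\ast$ these kernels generate exactly the compact (= original) topology, so that no proper coarsening survives. Everything else is bookkeeping: matching the operation on $H_X$ with that on $H(w)$, and invoking Corollary \ref{l:com} and Fact \ref{fac:bir}(1).
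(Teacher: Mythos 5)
Your overall architecture is exactly the paper's: identify $H$ with the generalized Heisenberg group $H(w)$ of the evaluation pairing $w:V\times V^\ast\to\Z_2$, get minimality from Fact \ref{fac:bir}(1), and get non-archimedeanness (and local compactness) from Corollary \ref{l:com} together with the discreteness of $\Z_2\oplus V$ and compactness of $V^\ast$. The separatedness check and the argument that no Hausdorff group topology on $V^\ast$ coarser than the pointwise one keeps the maps $f\mapsto f(x)$ continuous are both fine.

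However, there is a genuine gap in your verification that $w$ is a \emph{minimal} biadditive mapping, precisely at the step you dismiss in one clause: ``$V$ is already discrete so $\sigma_1=\sigma$ is forced.'' This is false as a general principle --- discreteness of a topology does not prevent the existence of strictly coarser Hausdorff group topologies (the discrete group $\Z$ admits the coarser $p$-adic topologies; if your claim were right, every discrete group would be minimal). The definition of minimality of $w$ requires you to rule out \emph{every} coarser Hausdorff group topology $\sigma_1\subsetneq\sigma$ on $V$ for which $w:(V,\sigma_1)\times(V^\ast,\tau_1)\to\Z_2$ stays continuous, and the discrete side is in fact where the real content of the duality's minimality lies. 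A correct direct argument would run as follows: having already shown $\tau_1=\tau$, joint continuity of $w$ at $(0,0)$ gives a $\sigma_1$-neighborhood $U$ of $0$ in $V$ and a $\tau$-open subgroup $N\leq V^\ast$ of finite index with $f(x)=0$ for all $x\in U$, $f\in N$; thus $U$ lies in the annihilator $N^\perp\cong(V^\ast/N)^\ast$, which is finite, so $0$ has a finite $\sigma_1$-neighborhood and Hausdorffness forces $\sigma_1$ to be discrete. Alternatively you can simply invoke Fact \ref{lca} (minimality of the evaluation duality $G\times G^\ast\to\T$ for locally compact abelian $G$, applied to the discrete Boolean group $V$), which is exactly what the paper does; either way, the one-line appeal to discreteness must be replaced.
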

\begin{proof}
Using Fact \ref{lca} (or, by direct arguments) it is easy to see
that
%26.9  Meny, did you check in details if this is really so short to see that it is minimal ? for example that it is separated ?  %!
%27.9 maybe I'm missing something but I think it's separated because the characters of every abelian group G separte the points of G.
%27.9 and it is minimal because if not we obtain that w:V\times V^\ast\to \T is not minimal via the inclusion $Z_2\hookrightarrow \T$
the continuous separated biadditive mapping %%
 $$w: V \times V^* \to  \Z_2$$ is minimal. Then by Fact \ref{fac:bir}.1 the
corresponding Heisenberg group %ms0206$H(X)$ is minimal. $H(X)$ is
$H$ is minimal. $H$ is %ms0206non-archimedean
non-archimedean by Corollary \ref{l:com}.
\end{proof}

\begin{lemma} \label{l:wet}
Let $G$ be a topological subgroup of
 $\Homeo(X)$ %and $w(X)=w(G)$
 for some Stone space $X$ (see Lemma \ref{l:firstpart}).
Then $w(G) \leq w(X)=w(V)=|V|=w(V^{\ast}).$
\end{lemma}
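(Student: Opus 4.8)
The plan is to establish the chain of (in)equalities in pieces, treating the four cardinal-function claims separately. First I would observe that all the finite-weight/cardinality identities among $X$, $V$, and $V^*$ are purely a matter of Stone duality together with standard facts about weights of zero-dimensional compacta and weights of their dual Boolean algebras. Concretely: the Boolean algebra of clopen subsets of a Stone space $X$ has a base of cardinality $w(X)$ (any open base can be refined to a clopen base of the same cardinality, since $X$ is zero-dimensional and compact), so $|V| = w(X)$ for infinite $X$; for finite $X$ everything is finite and the statement is trivial. Then $w(V) = |V|$ because $V$ is discrete. Finally $w(V^*) = |V|$: the dual $V^* = \hom(V,\Z_2)$ is a compact subgroup of $\Z_2^V$, hence $w(V^*) \le |V|$; and since the evaluation pairing $w\colon V\times V^*\to\Z_2$ is separated (Theorem~\ref{t:1} and the surrounding discussion), distinct elements of $V$ are separated by $V^*$, which forces $w(V^*)\ge |V|$ (a compact group of weight $\kappa$ has at most $2^{\kappa}$ characters... more directly: $V$ embeds into $(V^*)^*$, and a compact group of weight $\kappa$ has a dense subgroup of cardinality $\le \kappa$, dualizing to $|V|\le w(V^*)$). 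I would phrase this last point via the standard duality $w(A^*) = |A|$ for a discrete abelian group $A$.

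The remaining and genuinely group-theoretic inequality is $w(G)\le w(X)$, where $G$ is a topological subgroup of $\Homeo(X)$ with the compact-open topology. Here I would use the explicit local base for the compact-open topology on $\Homeo(X)$ recorded earlier in the paper: the subgroups $M(P) = \{g\in G : gA_k = A_k \ \forall k\}$, as $P=\{A_1,\dots,A_n\}$ ranges over finite clopen partitions of $X$, form a local base at the identity, and in fact the two-element clopen partitions give a subbase. The number of clopen subsets of $X$ is $|V| = w(X)$ (for infinite $X$), hence the number of two-element clopen partitions is $\le w(X)$, so there is a subbase at $e$ of cardinality $\le w(X)$. Since $G$ is a topological group, a local base (and hence the whole topology) is determined by translates of a neighborhood subbase at $e$; standard cardinal arithmetic then gives a base for the topology of $G$ of cardinality $\le w(X)\cdot |G|$. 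To close the gap I would note $|G|\le |\Homeo(X)| \le |X|^{|X|}$, but that is too crude; instead the right move is: $w(G) = \chi(G)\cdot d(G)$ for a topological group (weight equals character times density), $\chi(G)\le w(X)$ by the subbase count above, and $d(G)\le |G|$, and a subgroup of $\Homeo(X)$ with a clopen-partition subbase of size $\le w(X)$ cannot have density exceeding $w(X)$ either — one shows any point of $G$ is a limit of a net indexed by the subbasic neighborhoods, giving $d(G) \le \chi(G)\cdot(\text{something}\le w(X))$. Cleanest is to invoke: for a non-archimedean group $G\le\Homeo(X)$ with $X$ Stone, $G$ embeds into the product $\prod_P \mathrm{Sym}(P)$-type object, or simply that $\Homeo(X)$ itself has weight $\le w(X)$ (it has a base of clopen sets in $X^X$ of size $w(X)$), and weight is monotone under subspaces.

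I expect the main obstacle to be the bookkeeping in $w(G)\le w(X)$: one must be careful that passing from a neighborhood subbase at $e$ of size $\le w(X)$ to a full base of the group topology does not blow up the cardinal beyond $w(X)$, and that the density of $G$ is controlled. The clean resolution is to prove $w(\Homeo(X)) \le w(X)$ directly — exhibit a base of the compact-open topology on $\Homeo(X)$ of cardinality $\le w(X)$ using finite clopen partitions (there are only $\le w(X)$-many finite clopen partitions of $X$ when $w(X)$ is infinite, hence only $\le w(X)$-many basic open sets $\{g : gA_k = B_k\}$), and then use monotonicity of weight under subspaces to get $w(G)\le w(\Homeo(X))\le w(X)$. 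For $X$ finite all quantities are finite and the lemma holds trivially, so one only needs the infinite case, where $w(X) = |V|$ is the uniform yardstick against which everything else is bounded.
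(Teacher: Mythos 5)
Your proposal is correct and follows essentially the same route as the paper: the identities $w(X)=|V|=w(V)=w(V^*)$ come from Stone/Pontryagin duality with $V$ discrete and $V^*$ compact, and $w(G)\leq w(X)$ comes from bounding the weight of the ambient compact-open topology and using monotonicity of weight under subspaces. The only cosmetic difference is that the paper cites Engelking's bound $w(C(A,B))\leq w(A)\cdot w(B)$ where you count a clopen-partition base for $\Homeo(X)$ by hand; your closing ``clean resolution'' is exactly the intended argument, and the earlier detour through $\chi(G)\cdot d(G)$ is unnecessary.
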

\begin{proof}
Use the facts that in our setting $V$ is discrete and $V^*$ is compact.
Recall also that (see e.g.,  \cite[Thm. 3.4.16]{Eng})
$$w (C(A,B)) \leq w(A) \cdot w(B)$$
for every locally compact Hausdorff space $A$ %msAUGand the space $C(A,B)$ endowed with the compact-open topology.
(where the space $C(A,B)$ is endowed with the compact-open
topology).
%we have $w(G) \leq w(X).$
%
%Since $X$ is compact and $\Z_2$ is discrete the compact open
%topology on $V=C(X,\Bbb{Z}_2)$ coincides with the discrete topology.
%Hence, $|V|=w(V)$. by Fact \ref{are} we obtain that $|V|=w(V)\leq
%w(X).$  Now, $X$ is zero-dimensional, hence, it has a base $B$
%consisting of clopen subsets. But, $B\subseteq V. $ This implies
%that $w(X)\leq w(V).$ Therefore, $w(X)=w(V).$
%
%Since $V^{\ast}=hom(V, \Bbb{Z}_2)$ and $V$ is discrete we
%obtain by Fact \ref{are} that $w(V^{\ast})\leq w(V).$ Now,
%$(V^{\ast})^{\ast}=hom(V^{\ast},\Bbb{Z}_2)$ is homeomorphic to $V$
%and $V^{\ast}$ is compact, so we can apply again Fact \ref{are} to
%obtain $w(V)\leq w(V^{\ast}).$ Hence, $w(V)=w(V^{\ast}).$
\end{proof}

The action of $G \subset \Homeo(X)$ on $X$ and the
%26.9
functoriality of the Stone duality induce
the actions on $V$ and $V^*$. %0805Precisely,
 More precisely, we have %%
$$\alpha: G \times V\rightarrow V, \ \ \alpha(g,A)=g(A)$$
and
%2104 , not in place moving it just after V^{\ast} $$\beta: G\times V^{\ast} \rightarrow V^{\ast}\ \ \ ,\beta(g,f):=gf, \ \ (gf)(A)=f(g^{-1}(A)).$$
$$\beta: G\times V^{\ast} \rightarrow V^{\ast},\ \ \ \beta(g,f):=gf,
\ \ (gf)(A)=f(g^{-1}(A)).$$
%msFeb11Every translation under these actions are continuous group
%automorphisms.
 Every translation under these actions is a continuous group
automorphism. Therefore we have the associated group homomorphisms:
$$i_{\alpha}: G \to Aut(V) $$
$$i_{\beta}: G \to Aut(V^*)$$
The pair $(\alpha, \beta)$ is a birepresentation of $G$ on $w: V
\times V^* \to \Z_2.$ Indeed,
$$w(gf,g(A))=(gf)(g(A))=f(g^{-1}(g(A)))=f(A)=w(f,A).$$

\begin{lemma} \label{l:emb}
\ben
\item
%26.9
Let $G$ be a topological subgroup of $\Homeo(X)$ for some Stone space %2104$X$.
$X.$
The action $\alpha: G \times V\rightarrow V$ induces a topological
group embedding $i_{\alpha}: G \hookrightarrow Aut(V)$.
\item The natural evaluation %(Gelfand type)
map
$$\delta: X \to V^*, \ x \mapsto \delta_x, \ \ \ \delta_x(f)=f(x)$$
is a topological $G$-embedding.
\item
The action $\beta: G\times V^{\ast} \rightarrow V^{\ast}$ induces a
topological group embedding $i_{\beta}:~G \hookrightarrow Aut(V^*)$.
\item The pair $\psi:=(\alpha, \beta)$ is a t-exact birepresentation of $G$ on $w: V \times V^* \to \Z_2.$
\een
\end{lemma}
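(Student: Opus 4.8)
The plan is to treat the four assertions roughly in order, since (4) follows almost immediately from (1) (or (3)) together with Lemma \ref{are2}. For (1), the map $i_\alpha : G \to Aut(V)$ is the homomorphism induced by the natural action of $G \subset \Homeo(X)$ on the Boolean group $V$ of clopen subsets of $X$. It is injective because $G$ acts faithfully on $X$, hence faithfully on the clopen algebra $V$ (two distinct homeomorphisms of a Stone space differ on some point, and by zero-dimensionality on some clopen set separating that point from its image). To see that $i_\alpha$ is a topological embedding, I would compare the two local bases explicitly: on the $G$-side, the subbase of subgroups $M(P) = \{g : gA_k = A_k\}$ indexed by two-element clopen partitions $P = \{A, X \setminus A\}$ (described just before Theorem \ref{t:1}); on the $Aut(V)$-side, recall that $V$ is discrete, so by the remarks on the Birkhoff topology $Aut(V) \subset V^V$ carries the topology of pointwise convergence, with subbasic neighborhoods of the identity of the form $\{\varphi \in Aut(V) : \varphi(A) = A\}$ for a single clopen $A$. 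Under $i_\alpha$ these two families correspond exactly (stabilizing the clopen set $A$ is the same as stabilizing the partition $\{A, X\setminus A\}$), so $i_\alpha$ is a homeomorphism onto its image.

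For (2), the evaluation map $\delta : X \to V^*$, $x \mapsto \delta_x$, is continuous because each coordinate $x \mapsto \delta_x(f) = f(x)$ is continuous (as $f \in C(X,\Z_2)$) and $V^* \subset \Z_2^V$ has the product topology. It is injective by zero-dimensionality: distinct points of $X$ are separated by a clopen set, i.e.\ by some $f \in V$. Since $X$ is compact and $V^*$ Hausdorff, $\delta$ is a topological embedding. That $\delta$ is a $G$-map is the identity $\delta_{gx}(f) = f(gx) = f(g^{-1}{}^{-1}x) = (g\delta_x)(f)$, using the definition of $\beta$; this is the same computation already displayed before the lemma showing $(\alpha,\beta)$ is a birepresentation.

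For (3), $i_\beta : G \to Aut(V^*)$ is again injective: by (2), $X$ embeds $G$-equivariantly as $\delta(X) \subset V^*$, so if $g$ acts trivially on $V^*$ it acts trivially on $\delta(X) \cong X$, forcing $g = e$. For the embedding property, since $V^*$ is compact the Birkhoff topology on $Aut(V^*)$ coincides with the compact-open topology, so by Lemma \ref{are2}(1) (or (2)) the action of $Aut(V^*)$ on $V^*$ is $t$-exact; restricting along the continuous injection $i_\beta$ and using that $\delta(X)$ is a $G$-subspace on which the $G$-action already recovers the original topology of $G$ (item (2) plus the fact that $G \hookrightarrow \Homeo(X)$ is an embedding), one concludes $i_\beta$ is an embedding. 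Finally (4): the pair $\psi = (\alpha,\beta)$ is a birepresentation by the invariance identity above; it is $t$-exact because $\alpha$ alone is $t$-exact — indeed $i_\alpha$ is a topological embedding into $Aut(V)$ by (1), so by Lemma \ref{are2}(1) the action $\alpha$ of $G$ on the (discrete, hence locally compact) group $V$ is $t$-exact, and as noted in Subsection \ref{sub:dual} $t$-exactness of one of the two actions already yields $t$-exactness of the birepresentation.

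I expect the main obstacle to be part (1) — specifically, verifying carefully that the compact-open topology on $G \subset \Homeo(X)$ really does match the pointwise-on-$V$ topology transported through $i_\alpha$, i.e.\ that stabilizers of finite clopen partitions and stabilizers of individual clopen sets generate the same filter. The subtlety is that a single clopen set $A$ yields the two-block partition $\{A, X \setminus A\}$, and conversely a finite clopen partition is a finite join of two-block partitions, so the subbases coincide; once this bookkeeping is done the rest is routine, and parts (2)–(4) are then short consequences.
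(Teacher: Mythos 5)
Your proposal is correct and follows essentially the same route as the paper's proof: for (1) matching the subbase of clopen-set stabilizers $H_A=\{g: gA=A\}$ in $G\subset \Homeo(X)$ with the pointwise (= Birkhoff, since $V$ is discrete) subbase in $Aut(V)$; for (2) the standard continuity/injectivity/compactness argument; for (3) deducing the embedding property from the $t$-exactness of the action on $X\cong\delta(X)$; and for (4) invoking Lemma \ref{are2}.1 via (1). The only step you assert rather than verify is the continuity of $\beta$ (equivalently of $i_{\beta}$), which the paper checks explicitly, but this is immediate from the neighborhoods $\cap_{k=1}^m H_{A_k}$ you already have in hand, since $(gf)(A)=f(g^{-1}A)$ and $V^*$ carries the pointwise topology.
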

\begin{proof}
(1)
%All translations under $\alpha$ are continuous group automorphisms. Hence
%the associated group homomorphism $i_{\alpha}: G \to Aut(V) $ is well defined.
Since $V$ is discrete, the Birkhoff topology on $Aut(V)$ coincides
with the pointwise topology. Recall that the topology on $G$
inherited from $\Homeo(X)$ is defined by the local subbase
$$H_A:=\{g \in G: \ \ g A=A\}$$
where $A$ runs over nonempty clopen subsets in $X$. Each $H_A$ is a
clopen subgroup of $G$. On the other hand the pointwise topology on
$i_{\alpha}(G) \subset Aut(V)$ is generated by the local subbase of
the form
%ms$$\{i_{\alpha} (g) \in i_{\alpha}(G): \ \ g \chi_{A}=\chi_{A}\}.$$
%msObviously $H_A=\{g \in G: \ \ g A=\chi_A\}$.
$$\{i_{\alpha} (g) \in i_{\alpha}(G): \ \ g A=A\},$$
%26.9 where, as above, $A$ is a nonempty clopen subset.
So, $i_{\alpha}$ is a topological group embedding.

% We first
%prove that $\alpha$ is continuous. Since $V$ is discrete it is
%enough to prove (to establish continuity) that if $g_i\rightarrow g$
%in $G$ and $A$ a clopen subset of $X$ then $\chi_{g_i(A)}\rightarrow
%\chi_{g(A)}.$ We have to show that $\chi_{g_i(A)}= \chi_{g(A)}$
%eventually. Now, since $g_i\rightarrow g$ then $g^{-1}g_i\rightarrow
%e_G=e_{\Homeo(X)}.$ Since $B(A,A^{c})=\{\phi\in G: \phi(A)=A\}$  is
%an open nbd of $e_G$ (actually sets of this form define a local subbase
%at the identity - see \cite{MES01}) then $g^{-1}g_i$ is eventually
%in $B(A,A^{c}).$ Hence, $\chi_{g_i(A)}= \chi_{g(A)}$ eventually as
%required.

(2) Straightforward.

(3)
%All translations under $\beta$ are continuous group automorphisms. Hence
%the associated group homomorphism $i_{\beta}: G \to Aut(V^*) $ is well defined.
Since $V^*$ is compact, the Birkhoff topology on $Aut(V^*)$
coincides with the compact open topology.

The action of $G$ on $X$ is t-exact. Hence, by (2) it follows that
the action $\beta$ cannot be continuous under any weaker group
topology on $G$. Now it suffices to show that the action $\beta$ is
continuous.

The topology on $V^* \subset \Z_2^V$ is a pointwise topology
inherited from $\Z_2^V$. So it is enough to show that for every
finite family $A_1, A_2, \cdots, A_m$ of nonempty clopen subsets in
$X$ there exists a neighborhood $O$ of $e \in G$ such that
%ms$(g\psi)(\chi_{A_k})=\psi(\chi_{A_k}) $ for every $k$. Since
%ms$(g\psi)(\chi_{A_k})=\psi(\chi_{g^{-1}(A_k)})$
$(g\psi)(A_k)=\psi(A_k) $ for every $k$. Since
$(g\psi)(A_k)=\psi(g^{-1}(A_k))$
 we may define $O$ as
$$O:=\cap_{k=1}^m H_{A_k}$$

(Another way to prove (3) is to combine (1) and \cite[Theorem 26.9]{HR}).

(4) $\psi=(\alpha, \beta)$ is a birepresentation as we already
noticed before this lemma. The t-exactness is a direct consequence
of (1) or (3) together with %21.9Fact \ref{are2}.2.
Fact \ref{are2}.1.
\end{proof}

%We obtain the following result:

\begin{thm} \label{t:2}
The topological group $$M:=M(\psi)= H(w)\leftthreetimes_\pi G=
((\Bbb{Z}_2 \oplus V)\leftthreetimes V^{\ast})\leftthreetimes_\pi
G$$
is a %perfectly
non-archimedean minimal group.
\end{thm}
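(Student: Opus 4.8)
The plan is to verify the hypotheses of Fact \ref{fac:bir}.2 for the birepresentation $\psi=(\alpha,\beta)$ of $G$ on $w:V\times V^*\to\Z_2$, and then to add non-archimedeanness separately via the closure properties recorded in Section 3. First I would recall from the proof of Theorem \ref{t:1} that the biadditive map $w:V\times V^*\to\Z_2$ is \emph{minimal} and that $A=\Z_2$ is a (compact, hence) minimal group. By Lemma \ref{l:emb}.4 the pair $\psi=(\alpha,\beta)$ is a \emph{t-exact} $G$-birepresentation in $w$. These are precisely the hypotheses of Fact \ref{fac:bir}.2, so the induced group $M(\psi)=H(w)\leftthreetimes_\pi G$ is minimal. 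Since $H(w)=(\Z_2\oplus V)\leftthreetimes V^*$ and $\pi(g,(a,x,f))=(a,gx,gf)$, this $M(\psi)$ is exactly the group $M$ in the statement, so minimality follows.

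For non-archimedeanness I would argue by iterated extensions. By Theorem \ref{t:1}, $H(w)=(\Z_2\oplus V)\leftthreetimes V^*$ is non-archimedean; equivalently, one can rebuild this directly from Corollary \ref{l:com}, since $\Z_2\oplus V$ is discrete (hence $\NA$) and $V^*$ is a subgroup of $\Z_2^V$ (hence $\NA$). Now $H(w)$ is an $\NA$ group which is moreover a $G$-group via the action $\pi$ by automorphisms (each $\pi^g$ is a topological group automorphism of $H(w)$, by the formula above together with the fact that $\alpha,\beta$ act by automorphisms), and $G$ itself is $\NA$ by assumption. Applying Corollary \ref{l:com} once more to the semidirect product $M=H(w)\leftthreetimes_\pi G$ yields that $M$ is non-archimedean. (Alternatively one invokes Fact \ref{l:Hig} directly with the closed normal subgroup $N=H(w)$ and quotient $M/N\cong G$.)

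The only genuinely nontrivial ingredient is the t-exactness of $\psi$, but that has already been done in Lemma \ref{l:emb}: it rests on the t-exactness of the action of $G\subset\Homeo(X)$ on the Stone space $X$ (Lemma \ref{are2}.2) transported through the Stone duality to the embeddings $i_\alpha:G\hookrightarrow Aut(V)$ and $i_\beta:G\hookrightarrow Aut(V^*)$. So at the level of this proof there is no real obstacle: the two-line argument is simply the conjunction of Fact \ref{fac:bir}.2 (for minimality) and Corollary \ref{l:com} (for non-archimedeanness), each of whose hypotheses has been established in the preceding lemmas. The one point to state carefully is that the group denoted $M$ in the theorem, with the displayed bracketing $((\Z_2\oplus V)\leftthreetimes V^*)\leftthreetimes_\pi G$, is literally the induced group $M(\psi)$ of Subsection \ref{sub:dual}, so that Fact \ref{fac:bir}.2 applies verbatim.
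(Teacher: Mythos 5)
Your proposal is correct and follows essentially the same route as the paper: the paper's proof is exactly the conjunction of Corollary \ref{l:com} (for non-archimedeanness) with Theorem \ref{t:1}, Lemma \ref{l:emb} and Fact \ref{fac:bir}.2 (for minimality). You have merely spelled out the verification of the hypotheses in more detail, which is accurate throughout.
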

\begin{proof}
By  Corollary \ref{l:com}, $M$ is non-archimedean. Use Theorem
\ref{t:1}, Lemma \ref{l:emb} and Fact \ref{fac:bir} to conclude that
$M$ is a minimal group.
\end{proof}

\begin{corol} \label{c:main}
Every (locally compact) non-archimedean group $G$ is a group
retract of a (resp., locally compact) %perfectly
minimal non-archimedean group $M$ where $w(G)=w(M)$.
\end{corol}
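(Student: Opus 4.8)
The plan is to read this off from Theorem~\ref{t:2}, using the representation result Lemma~\ref{l:firstpart} to produce the input data and the estimate Lemma~\ref{l:wet} to control the weight; there is no real obstacle here beyond the bookkeeping for the weight and, in the locally compact case, for local compactness.

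First I would dispose of the trivial case: if $G$ is finite then it is compact, hence already minimal and non-archimedean, and $\mathrm{id}:G\to G$ is the desired retraction. So assume $G$ is infinite (and hence, being Hausdorff, $w(G)\geq\aleph_0$). By Lemma~\ref{l:firstpart} I fix a Stone space $X$ together with a topological group embedding $G\hookrightarrow\Homeo(X)$ satisfying $w(X)=w(G)$, and identify $G$ with its image. Let $V$, $V^{\ast}$, $w:V\times V^{\ast}\to\Z_2$ and the birepresentation $\psi=(\alpha,\beta)$ be the data attached to $X$ in Section~\ref{s:main}. Theorem~\ref{t:2} then asserts that
$$M:=M(\psi)=H(w)\leftthreetimes_\pi G=((\Z_2\oplus V)\leftthreetimes V^{\ast})\leftthreetimes_\pi G$$
is a non-archimedean minimal group.

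Next, the retraction is built into the semidirect product structure. The map $s:G\to M$, $s(g)=(e_{H(w)},g)$, is a topological group embedding onto the closed subgroup $\{e\}\times G$ of $M$, while $H(w)\times\{e\}$ is a closed normal subgroup of $M$ whose quotient is canonically $G$; the corresponding quotient homomorphism $r:M\to G$, $r(h,g)=g$, is continuous, onto, and satisfies $r\circ s=\mathrm{id}_G$. Thus $G$ is a group retract of the non-archimedean minimal group $M$.

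It remains to check $w(M)=w(G)$ and the local compactness assertion. As a topological space $M$ is the finite product $(\Z_2\times V)\times V^{\ast}\times G$. By Lemma~\ref{l:wet} we have $w(V)=w(V^{\ast})=|V|=w(X)=w(G)$, and $\Z_2$ is finite, so the weight of this finite product of spaces (one of which, $V$, is infinite of weight $w(G)$) equals $w(G)$; hence $w(M)=w(G)$. If moreover $G$ is locally compact, then each of the factors $\Z_2$, $V$, $V^{\ast}$, $G$ is locally compact ($\Z_2$ and $V^{\ast}$ are even compact, $V$ is discrete), so their finite product $M$ is locally compact as well. It is perhaps worth stressing that the ambient group $\Homeo(X)$ itself need not be locally compact — for instance $\Homeo(\{0,1\}^{\aleph_0})$ is not — but only the subgroup $G$ and the locally compact group $H(w)$ enter the construction of $M$, so this is immaterial.
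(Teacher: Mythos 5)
Your proposal is correct and follows essentially the same route as the paper: the paper's proof is exactly ``apply Theorem \ref{t:2}, use Lemma \ref{are2}.1 for t-exactness, and note the local compactness of $\Bbb{Z}_2$, $V$, $V^{\ast}$ (resp.\ $G$)'', with the weight equality coming from Lemma \ref{l:wet}. You merely spell out the details the paper leaves implicit (the explicit section/projection giving the retraction, the weight bookkeeping, and the harmless finite case), all of which check out.
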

\begin{proof}
Apply Theorem \ref{t:2} taking into account Fact \ref{are2}.1 %\ref{prop:wet}
and the local compactness of the groups $\Bbb{Z}_2, V, V^{\ast}$
(resp., $G$).
\end{proof}

%msSep I think that the following  it is not true in general: see at the appendix below
%msSepIf $G$ is $n$-step nilpotent then we can assume that $M$ is
%msSep$(n+1)$-step nilpotent.

\begin{remark}
Another proof of Corollary \ref{c:main} can be obtained by the
following way. By Lemma \ref{l:emb} a non-archimedean group $G$ can
be treated as a subgroup of the group of all automorphisms
$Aut(V^*)$ of the compact abelian group $V^*$. In particular, the
action of $G$ on $V^*$ is t-exact. The group $V^*$ being compact is
minimal. Since $V^*$ is abelian one may apply \cite[Cor. 2.8]{MEG95}
which implies that $V^* \leftthreetimes G$ is a minimal topological
group.
%26.9
By Lemmas \ref{l:firstpart} and \ref{l:wet} we may assume that  $w(G)=w(V^* \leftthreetimes G)$.
\end{remark}

%1805 gluing

\section{More characterizations of non-archimedean groups}

The results and discussions above lead to the following
list of characterizations (compare Lemma \ref{l:firstpart}).

\begin{thm} \label{t:condit}
The following assertions are equivalent: \ben
\item $G$ is a non-archimedean topological group.

%\item The right (left) uniformity on $G$ is non-archimedean.
%\item $\dim \beta_G G =0$.
%\item $G$ is a topological subgroup of $\Homeo(X)$ for some compact
%zero-dimensional space $X$ (where $w(X)=w(G)$).
\item $G$ is a topological subgroup of the automorphisms group (with the pointwise topology) $Aut(V)$ for some
discrete %zero-dimensional
Boolean ring $V$ (where $|V|=w(G)$).
\item
$G$ is embedded into the symmetric topological group $S_{\kappa}$
(where $\kappa=w(G)$).
\item
$G$ is a topological subgroup of the group $Is(X,d)$ of all
isometries of an %21.9ultrametric
ultra-metric space $(X,d)$, with the topology of pointwise
convergence.
\item The right (left) uniformity on $G$ can be generated by a
system of right (left) invariant %msAugpseudo-ultrametrics.
ultra-semimetrics.

%ms\item $G$ is a topological subgroup of the automorphism group $Aut(2^{\kappa})$ for the generalized Cantor group (where $\kappa=w(G)$).
%msJul\item ???????? $G$ is a topological subgroup of the automorphism group $Aut(X)$ for some
%msJulcompact zero-dimensional Boolean topological group $X$ (where
%msJul$w(X)=w(G)$).
\item $G$ is a topological subgroup of the automorphism group $Aut(K)$ for some compact abelian group $K$ (with $w(K)=w(G)$).
%\item
%$G$ is isomorphic to a subgroup of $Unif(Y,\mu)$ for some non-archimedean uniformity $\mu$.
\een
\end{thm}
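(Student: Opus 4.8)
The plan is to establish the six conditions as equivalent by running the chain $(1)\Rightarrow(2)\Rightarrow(3)\Rightarrow(4)\Rightarrow(1)$, then appending $(1)\Rightarrow(5)\Rightarrow(1)$ and $(1)\Leftrightarrow(6)$. Throughout I would lean on Lemma \ref{l:firstpart} (which already identifies $\mathcal{NA}$ with the topological subgroups of $\Homeo(X)$ for a Stone space $X$ with $w(X)=w(G)$), on Lemmas \ref{l:emb} and \ref{l:wet}, and on the fact that $\mathcal{NA}$ is hereditary for subgroups.

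For $(1)\Rightarrow(2)$ and $(1)\Rightarrow(6)$ I would fix, via Lemma \ref{l:firstpart}, a Stone space $X$ with $G\hookrightarrow\Homeo(X)$ and $w(X)=w(G)$, and set $V=C(X,\Z_2)$, a \emph{discrete Boolean ring} under pointwise operations, with $V^{\ast}=Hom(V,\Z_2)$ a \emph{compact abelian} group. The $G$-action on clopen sets is by Boolean ring automorphisms and, by Lemma \ref{l:emb}, induces topological group embeddings $G\hookrightarrow Aut(V)$ and $G\hookrightarrow Aut(V^{\ast})$; by Lemma \ref{l:wet} one has $|V|=w(V)=w(X)=w(G)$ and $w(V^{\ast})=w(G)$ (the finite case being trivial), which yields $(2)$ with this $V$ and $(6)$ with $K:=V^{\ast}$. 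For $(2)\Rightarrow(3)$: ring automorphisms of $V$ are in particular permutations of the underlying set, and on $Aut(V)$ the Birkhoff topology is the topology of pointwise convergence, i.e.\ the one inherited from $S_V$; since $|V|=w(G)$ this embeds $G$ topologically into $S_{w(G)}$. For $(3)\Rightarrow(4)$: equip the set $\kappa$ with $d(i,j)=1$ for $i\neq j$, an ultra-metric, so that $Is(\kappa,d)=S_\kappa$ and, as $d$ takes only the values $0$ and $1$, pointwise convergence of isometries coincides with the permutation topology of $S_\kappa$.

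For $(4)\Rightarrow(1)$ I would show that for every finite $F\subseteq X$ and $\e>0$ the set $W(F,\e)=\{g\in Is(X,d):\ d(gx,x)<\e\ \ \forall x\in F\}$ is a \emph{subgroup}: closure under products follows from $d(g(hx),x)\le\max\{d(g(hx),gx),d(gx,x)\}$ together with $d(g(hx),gx)=d(hx,x)$ (as $g$ is an isometry), and closure under inverses from $d(g^{-1}x,x)=d(x,gx)$. These subgroups are open and form a local base at the identity of $Is(X,d)$, so that group, and hence its subgroup $G$, is non-archimedean. For $(1)\Rightarrow(5)$, from a local base $\{H_i\}$ of open subgroups I would set $\rho_i(x,y):=0$ if $xy^{-1}\in H_i$ and $:=1$ otherwise; each $\rho_i$ is a right-invariant ultra-semimetric (transitivity of the coset relation is exactly the strong triangle inequality), and the sets $\{(x,y):\rho_i(x,y)<1\}$ form a base of the right uniformity (the left case being symmetric, using $x^{-1}y\in H_i$). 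Conversely $(5)\Rightarrow(1)$: if the right uniformity is generated by right-invariant ultra-semimetrics $d_i$, then each $\{(x,y):d_i(x,y)<\e\}$ is an equivalence relation, so the right uniformity is non-archimedean and Lemma \ref{l:firstpart} finishes.

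The one implication needing an external input is $(6)\Rightarrow(1)$, and this is where I expect the main obstacle: a compact abelian $K$ (e.g.\ $K=\T^n$) need not be zero-dimensional, so one cannot realize its uniformity by partitions directly. Instead I would invoke Pontryagin duality: $Aut(K)$ with the Birkhoff topology is topologically isomorphic to $Aut(\widehat K)$ with the topology of pointwise convergence on the discrete group $\widehat K$ — the verification being that, $K$ being compact, uniform convergence of automorphisms of $K$ is equivalent to eventual equality of the dual automorphisms at each character, because distinct characters are uniformly separated in $C(K,\T)$, so $\widehat K$ is discrete there. Hence $Aut(K)$ embeds as a topological subgroup of the non-archimedean group $S_{\widehat K}$, whence every subgroup $G\le Aut(K)$ is non-archimedean. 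All the remaining steps are routine manipulations with open subgroups and the strong triangle inequality.
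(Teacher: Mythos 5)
Your proposal is correct; every implication you give goes through. The overall circle of ideas is the same as the paper's, but your decomposition of the implication graph differs in two places, and it is worth recording what each choice buys. First, the paper closes the main cycle as $(4)\Rightarrow(5)\Rightarrow(1)$: from an isometric representation on an ultra-metric space $(X,d)$ it manufactures the left-invariant ultra-semimetrics $\rho_z(s,t):=d(sz,tz)$, so that $(5)$ is obtained \emph{from} $(4)$ and then $(5)\Rightarrow(1)$ is the observation that $\{g:\rho(g,e)<1/n\}$ is an open subgroup. You instead prove $(4)\Rightarrow(1)$ directly (your verification that $W(F,\e)$ is an open subgroup is exactly the strong-triangle-inequality computation the paper hides inside $(5)\Rightarrow(1)$) and then attach $(5)$ by a separate loop $(1)\Rightarrow(5)\Rightarrow(1)$ using the $\{0,1\}$-valued coset semimetrics of open subgroups. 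Both are equally elementary; the paper's route makes $(5)$ a corollary of the isometric representation, yours makes it a corollary of the definition of $\mathcal{NA}$. Second, for $(6)$ the paper goes $(3)\Rightarrow(6)$ via the permutation action of $S_\kappa$ on the Cantor group $\Z_2^\kappa$, whereas you go $(1)\Rightarrow(6)$ by taking $K:=V^{\ast}$ and citing Lemma \ref{l:emb}.3; your choice reuses the Heisenberg machinery already built and gives the same weight estimate via Lemma \ref{l:wet}, while the paper's $\Z_2^\kappa$ argument is self-contained and does not need the Stone-space apparatus. For $(6)\Rightarrow(1)$ you and the paper use the identical duality argument ($Aut(K)$ anti-isomorphic to $Aut(K^{\ast})$ with $K^{\ast}$ discrete, hence a subgroup of $S_{K^{\ast}}$); the paper simply cites \cite[Theorem 26.9]{HR} where you sketch one direction of the verification, which is fine since the fact is standard.
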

\begin{proof}
(1) $\Rightarrow$ (2) As in Lemma \ref{l:emb}.1.

(2) $\Rightarrow$ (3) Simply take the embedding of $G$ into $S_V
\cong S_{\kappa}$, with $\kappa=|V|=w(G)$.

(3) $\Rightarrow$ (4) Consider the two-valued ultra-metric on the discrete
space
%2004 $X:=V$.
$X$ with $|X|=\kappa$.

(4) $\Rightarrow$ (5) For every $z \in X$ consider the left
invariant ultra-semimetric
$$
\rho_z(s,t):=d(sz,tz).
$$
%26.9   %!
Then the collection $\{\rho_z\}_{z \in X}$ generates the left uniformity of $G$.
%!

(5) $\Rightarrow$ (1) Observe that for every right invariant
ultra-semimetric $\rho$ on $G$ and $n \in \N$ the set
$$
H:=\{g \in G: \ \rho(g,e) < 1/n \}
$$
is an open subgroup of $G$.

(3) $\Rightarrow$ (6) Consider the natural (permutation of
coordinates) action of $S_{\kappa}$ on the
usual Cantor additive group $\Z_2^{\kappa}$.
 It is easy to see that this action implies the natural embedding of $S_{\kappa}$
 (and hence, of its subgroup $G$) into the group $Aut(\Z_2^{\kappa})$.

%msJul(6) $\Rightarrow$ (7) Trivial.

%ms(6) $\Rightarrow$ (1) Use Lemma \ref{l:firstpart}.
(6) $\Rightarrow$ (1)
%? \footnote{(6) does not imply (1) if we remove
%"Abelian" I didn't quite understand Dikran's example. Can you
%explain it to me? And also of course should we add a remark after
%the thm explaining that "Abelian" in (6) is essential? mm=??????}
Let $K$ be a
compact abelian group and $K^{\ast}$ be its (discrete) dual.
 By \cite[Theorem 26.9]{HR}
the natural map $\nu: g \mapsto \tilde{g}$ defines a topological
anti-isomorphism of $Aut(K)$ onto $Aut(K^{\ast}).$ Now,  $K^{\ast}$
is discrete, hence, $Aut(K^{\ast})$ is  non-archimedean as a
subgroup of the symmetric group $S_{K^{\ast}}.$ Since $G$ is a
topological subgroup of $Aut(K)$ we conclude that $G$ is also
non-archimedean (because its opposite group $\nu(G)$ being a
subgroup of $Aut(K^{\ast})$ is non-archimedean).
\end{proof}

\begin{remark}
\ben
\item
Note that the universality of $S_\N$ among Polish groups was
proved by Becker and Kechris (see \cite[Theorem 1.5.1]{bk}). The
universality of $S_{\kappa}$ for $\mathcal{NA}$ groups with weight $\leq
\kappa$ can be proved similarly. It appears in the work of
Higasikawa, \cite[Theorem 3.1]{Hig}.
%28.10 new item
\item Isometry groups of ultra-metric spaces studied among others by Lemin and Smirnov \cite{Les86}.
Note for instance that \cite[Theorem 3]{Les86} implies the
equivalence (1) $\Leftrightarrow$ (4). Lemin \cite{Lem84}
established that a metrizable group is non-archimedean iff it has a
left invariant compatible ultra-metric.
%1805
\item In item (6) of Theorem \ref{t:condit} it is essential that the compact group $K$ is \emph{abelian}.
%2005
For every connected non-abelian compact group $K$ the group $Aut(K)$ is not $\NA$ containing a nontrivial continuous image of $K$.
\item Every non-archimedean group admits a topologically faithful unitary representation on a Hilbert space.
It is straightforward for $S_X$ (hence, also for its subgroups) via permutation of
coordinates linear action.
\een
\end{remark}

\section{Automorphizable actions and epimorphisms in topological groups}
\label{s:aut}

Resolving a longstanding principal problem by K. Hofmann,
Uspenskij \cite{Us-epic} has shown
that in the category of Hausdorff topological groups epimorphisms need not have a dense range.
Dikranjan and Tholen
%2005 gave
present
in \cite{DT} a rather direct proof of this important result of Uspenskij.
Pestov gave later a criterion
\cite{Pest-epic, pest-wh} (Fact \ref{pest}) which we will use below in Theorem \ref{epic=dense}. This criterion
is closely related to the natural concept of the \emph{free topological $G$-group} $F_G(X)$ of a $G$-space $X$
introduced by the first author \cite{Me-F}.
It is a natural $G$-space version of the usual \emph{free topological group}.
A topological (uniform)
$G$-space $X$ is said to be \emph{automorphizable} if $X$ is a topological (uniform)
$G$-subspace of a $G$-group $Y$ (with its right uniform structure).
Equivalently, if the universal morphism $X \to F_G(X)$ of $X$ into
the free topological (uniform) $G$-group $F_G(X)$ of the (uniform)
$G$-space $X$ is an embedding.

\begin{fact} \label{pest} \emph{(Pestov \cite{Pest-epic, pest-wh})}
Let $f: M \to G$ be a continuous homomorphism between Hausdorff topological groups.
Denote by $X:=G/H$ the left coset $G$-space, where
$H$ %1904 be
is the closure of the subgroup $f(M)$ in %2104$G$.
$G.$ The following are equivalent: \ben
%\item %msAug$H \to G$ is epic.
%The embedding $H\hookrightarrow G$ is an epimorphism.
\item %msAug$H \to G$ is epic.
$f: M \to G$ is an epimorphism.
\item %msAugThe free topological $G$-space $F_G(X)$ of $(G,X)$ is trivial.
The free topological $G$-group $F_G(X)$ of the $G$-space $X$ is trivial.
%\item Any equivariant morphism of the $G$-space $X$ to %msAugan automorphizable $G$-space
% a Hausdorff $G$-group is trivial.
 \een
\end{fact}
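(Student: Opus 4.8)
The plan is to reduce everything to the inclusion $j\colon H\hookrightarrow G$ of the closed subgroup $H=\overline{f(M)}$, and then to match the defining property of an epimorphism with a fixed-point statement for $G$-actions that the universal property of $F_G(G/H)$ is built to detect. For the reduction: corestricting $f$ gives $f'\colon M\to H$ with dense range, hence $f'$ is an epimorphism (any two continuous homomorphisms out of $H$ that coincide on the dense subgroup $f(M)$ coincide on all of $H$, the codomain being Hausdorff). Since epimorphisms are left-cancellable and closed under composition --- if $f=j\circ f'$ is epi so is $j$, and $j$ epi together with $f'$ epi gives $f$ epi --- it follows that $f$ is an epimorphism if and only if $j$ is. As $X=G/H$, and hence $F_G(X)$, depends only on $H$, it suffices to prove that $j\colon H\hookrightarrow G$ is an epimorphism if and only if $F_G(G/H)$ is trivial.

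Next I would set up the dictionary. Regard $X=G/H$ as the transitive $G$-space with base point $o=eH$, and let $i\colon X\to F_G(X)$ be the canonical $G$-map; $F_G(X)$ is topologically generated by the orbit $G\cdot i(o)$. By the universal property, for every topological $G$-group $Y$ the continuous $G$-homomorphisms $F_G(X)\to Y$ correspond to the continuous $G$-maps $X\to Y$, which, $X$ being transitive, correspond in turn to the points $y_0\in Y$ whose $G$-stabilizer contains $H$, via $xH\mapsto x\cdot y_0$. Hence triviality of $F_G(G/H)$ amounts to the assertion that in every $G$-group $Y$ each point $y_0$ with $H\subseteq\operatorname{Stab}_G(y_0)$ is actually $G$-fixed (equivalently, that $i$ is constant).

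The bridge to epimorphisms is then the following standard device. Given a $G$-group $Y$ and $y_0\in Y$ with $H\subseteq\operatorname{Stab}_G(y_0)$, form inside the topological semidirect product $Y\leftthreetimes G$ the two continuous homomorphisms $v\colon x\mapsto(e,x)$ and $u\colon x\mapsto(y_0\cdot(x\cdot y_0^{-1}),x)$, the second being the conjugate of the first by the element $(y_0,e)$ and therefore a homomorphism as well. They agree on $H$ because $y_0$ is $H$-fixed, and $u=v$ holds exactly when $y_0$ is $G$-fixed. Consequently, if $j$ is an epimorphism then $u=v$, so every such $y_0$ is $G$-fixed; specializing to $Y:=F_G(G/H)$ and $y_0:=i(o)$ shows $i$ is constant, i.e.\ $F_G(G/H)$ is trivial. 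For the converse one must run this in reverse: from two continuous homomorphisms $u,v\colon G\to P$ agreeing on $H$, produce a $G$-group $Y$ and a point $y_0$ fixed by $H$ for which ``$y_0$ is $G$-fixed'' is equivalent to ``$u=v$'', so that the (already established) consequence of triviality --- that $H$-fixed points of $G$-groups are $G$-fixed --- forces $u=v$, whence $j$ is epi.

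The routine parts throughout are the semidirect-product identities and the continuity bookkeeping: each action $G\times Y\to Y$ is continuous by hypothesis, and $F_G(X)$ and the semidirect products carry topologies compatible with the relevant evaluations. I expect the genuine obstacle to be the converse direction. The naive candidate --- letting $G$ act on $P$ by the twisted translations $x\cdot p=u(x)\,p\,v(x)^{-1}$ --- only makes $P$ a $G$-space and not a $G$-group, since a translation-type action is never by group automorphisms; so one has to pass instead to a larger group on which $G$ does act by automorphisms and which still records the pair $(u,v)$ faithfully --- a relative free product of two copies of $P$, or a carefully chosen semidirect/amalgamated construction --- and then argue that triviality of $F_G(G/H)$ is strong enough to collapse it. This is precisely where Megrelishvili's construction of the free topological $G$-group and the full force of its universal property enter.
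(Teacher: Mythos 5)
First, note that the paper offers no proof of Fact \ref{pest}: it is imported from Pestov, so the only possible comparison is with Pestov's own argument rather than with anything in the text. Your reduction to the inclusion $j\colon H\hookrightarrow G$ is correct, your dictionary (triviality of $F_G(G/H)$ is equivalent to the statement that in every Hausdorff $G$-group every point whose stabilizer contains $H$ is $G$-fixed) is the right one, and your semidirect-product computation gives a complete proof of $(1)\Rightarrow(2)$: if $j$ is an epimorphism then $u(x)=(y_0\cdot(x\cdot y_0^{-1}),x)$ and $v(x)=(e,x)$ must coincide, so $i(eH)$ is $G$-fixed and $F_G(G/H)$ collapses. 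It is worth observing that the contrapositive of this implication --- $F_G(X)$ nontrivial implies $f$ is not an epimorphism --- is the only direction the paper actually uses (in the proof of Theorem \ref{epic=dense}), so for the purposes of this paper your argument already suffices.

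The converse $(2)\Rightarrow(1)$ is, however, a genuine gap, and you say so yourself. What is missing is the production, from a pair of distinct continuous homomorphisms $u,v\colon G\to P$ with $u|_H=v|_H$, of a Hausdorff $G$-\emph{group} $Y$ together with a nonconstant continuous $G$-map $G/H\to Y$. Your twisted-translation action $x\cdot p=u(x)\,p\,v(x)^{-1}$ only yields a $G$-space, as you observe, and ``embed this $G$-space into a $G$-group'' cannot be invoked as a black box: the whole point of Fact \ref{pest} and of Section \ref{s:aut} is that such automorphization can fail (e.g.\ for $\Homeo(\mathbb{S}_1)$ acting on $\mathbb{S}_1$). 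Pestov closes this gap by passing to the pushout $G\ast_H G$ in the category of Hausdorff topological groups (which exists by cocompleteness of that category): it suffices to treat the universal pair $i_1,i_2\colon G\to G\ast_H G$, and one checks that $N:=\ker\bigl(r\colon G\ast_H G\to G\bigr)$, where $r\circ i_1=r\circ i_2=\mathrm{id}_G$, is a Hausdorff $G$-group under conjugation by $i_1(G)$, and that $gH\mapsto i_1(g)\,i_2(g)^{-1}$ is a continuous $G$-map $G/H\to N$ which is nonconstant exactly when $i_1\neq i_2$; triviality of $F_G(G/H)$ then forces $i_1=i_2$, hence $j$ is an epimorphism. (In fact Pestov identifies $F_G(G/H)$ with $N$ itself.) Without this amalgam construction, or an equivalent device, the equivalence is proved in only one direction.
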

%\begin{proof}
%
%\end{proof}

Triviality in (2) means, `as trivial as possible',
isomorphic to the cyclic discrete group.

   Let $X$ be the %0805n-dimensional
   $n$-dimensional cube
$[0,1]^n$ or the $n$-dimensional sphere $\mathbb{S}_n$. Then by
\cite{Me-F} the free topological $G$-group $F_G(X)$ of the $G$-space
$X$ is trivial for every $n \in \N$, where $G=\Homeo(X)$ is the
corresponding homeomorphism group. So, one of the possible examples
of an epimorphism which is not dense can be constructed as the
natural embedding $H \hookrightarrow G$
where $G=\Homeo(\mathbb{S}_1)$ and $H=G_z$ is the stabilizer %1904for
of a point $z \in \mathbb{S}_1$.
The same example serve as an original counterexample in the paper of Uspenskij \cite{Us-epic}.

%1304 adding : at the end of the following sentence
In contrast, for Stone spaces, we have:

\begin{prop} \label{p:aut}
Every continuous action of a topological group $G$ on a Stone space
$X$ is automorphizable (in $\mathcal{NA}$).
Hence the canonical $G$-map $X \to F_G(X)$ %into the free topological $G$-group $F_G(X)$
is an embedding.
\end{prop}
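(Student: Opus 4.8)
The plan is to build, for any continuous action of a topological group $G$ on a Stone space $X$, an explicit $G$-group $K$ containing $X$ as a $G$-subspace. The natural candidate is provided by the Stone-dual machinery already set up in Section \ref{s:main}: let $V:=C(X,\Z_2)$ be the discrete Boolean group of clopen subsets of $X$, let $V^*:=\hom(V,\Z_2)$ be its compact dual, and recall from Lemma \ref{l:emb}.2 that the evaluation map $\delta: X \to V^*, \ x \mapsto \delta_x$ with $\delta_x(f)=f(x)$ is a topological $G$-embedding, where $G$ acts on $V^*$ by $(gf)(A)=f(g^{-1}(A))$. Since $V^*$ is a topological group and, by Lemma \ref{l:emb}.3, $G$ acts on it by continuous group automorphisms, the compact group $K:=V^*$ is a $G$-group in the sense of the Introduction. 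Thus $X$ embeds $G$-equivariantly into the $G$-group $V^*$, which is moreover non-archimedean (being a subgroup of $\Z_2^V$); this already witnesses automorphizability in $\mathcal{NA}$.

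First I would make precise that $G$ need not be assumed non-archimedean here: the construction of $V$, $V^*$, $\delta$, and the two actions $\alpha,\beta$ only uses that $X$ is a Stone space and that $G$ acts on it continuously by homeomorphisms. So I would restate the relevant parts of Lemma \ref{l:emb} in that generality, or simply observe that the proofs given there go through verbatim — the continuity of $\beta: G\times V^* \to V^*$ was checked by a direct neighborhood argument (for a finite family $A_1,\dots,A_m$ of clopen sets, take $O:=\bigcap_{k=1}^m H_{A_k}$ where $H_{A_k}=\{g: gA_k=A_k\}$), which uses nothing about $G$ beyond continuity of the action. Second, I would record that $\delta$ is a $G$-map: $\delta_{g x}(f)=f(gx)=f(g^{-1{}^{-1}}(x))=(g\delta_x)(f)$, so $\delta_{gx}=g\delta_x$; and that $\delta$ is a topological embedding because $X$ is compact Hausdorff, $V^*$ is Hausdorff, and $\delta$ separates points of $X$ (distinct points of a Stone space are separated by a clopen set, hence by some $f \in V$). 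Third, I would conclude that $X \hookrightarrow V^*$ exhibits $X$ as a $G$-subspace of a $G$-group, which is exactly the definition of automorphizable; the uniform version follows since $V^*$ is compact and carries a unique compatible uniformity, which is non-archimedean.

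For the final clause — that the canonical $G$-map $X \to F_G(X)$ into the free topological $G$-group is an embedding — I would invoke the characterization recalled just before the proposition: automorphizability of the $G$-space $X$ is \emph{equivalent} to the assertion that the universal morphism $X \to F_G(X)$ is an embedding. Since we have just produced an automorphizing $G$-group, the map into $F_G(X)$ factors through it and is therefore injective and initial, hence an embedding. I do not expect a genuine obstacle here; the one point that needs a line of care is the direction of the action formula and the verification that $\delta$ is $G$-equivariant rather than anti-equivariant, together with spelling out that the hypothesis ``$G$ non-archimedean'' used elsewhere in Section \ref{s:main} is not needed for this statement. If one wants the stronger ``in $\mathcal{NA}$'' qualifier (the $G$-group witnessing automorphizability lies in $\mathcal{NA}$), one simply notes $V^* \leq \Z_2^V$ is non-archimedean, and if one additionally wants $K$ to be acted on through $\mathcal{NA}$ it suffices that $i_\beta(G) \subset Aut(V^*)$, the latter being non-archimedean by Lemma \ref{l:firstpart} applied to $S_{V^*}$.
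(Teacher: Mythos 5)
Your proof is correct and is essentially the paper's own argument: the paper's proof of Proposition \ref{p:aut} is precisely ``use item (2) of Lemma \ref{l:emb}'', i.e., the evaluation $G$-embedding $\delta: X \hookrightarrow V^*$ into the compact non-archimedean $G$-group $V^*$, which is exactly the witness you construct (your observation that the hypothesis ``$G$ non-archimedean'' is not needed here is a worthwhile clarification, and your only slightly off remark — that $Aut(V^*)$ is $\mathcal{NA}$ ``by Lemma \ref{l:firstpart} applied to $S_{V^*}$'' rather than via the anti-isomorphism $Aut(V^*)\cong Aut(V)\leq S_V$ — concerns a side comment not needed for the statement).
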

\begin{proof}
Use item (2) of Lemma \ref{l:emb}.
\end{proof}

Roughly speaking
%1805
this result says that
the action by conjugations of a subgroup $H$ of a
non-archimedean group $G$ on $G$ %21.9reflect
reflects all possible difficulties of the Stone actions.
%28.10
Below in Theorem \ref{t:AE} we extend Proposition \ref{p:aut} to a
much larger class of actions on %1904 replacing here and in two other places "non-Archimedean" with  "non-archimedean"
non-archimedean uniform spaces, where $X$ need not be compact. This
will be used in Theorem \ref{epic=dense} about epimorphisms into $\NA$-groups.

\begin{defi} \label{d:quasib} \cite{me-fr}
Let $\pi: G \times X \to X$ be an action and $\mu$ be a uniformity on $X$.
We say that the action is \emph{$\pi$-uniform} if
for every $\varepsilon \in \mu$ and $g_0 \in G$ there exist: $\delta \in \mu$
and a neighborhood %0805$O(g_0)$
$O$ of $g_0$ in $G$ such that
%msAug $$d(gx,gy) \in \varepsilon  \ \ \ \ \forall \ (x,y) \in \delta, \ g \in O.$$
$$(gx,gy) \in \varepsilon  \ \ \ \ \forall \ (x,y) \in \delta, \ g \in O.$$
\end{defi}

It is an easy observation that if the action $\pi: G \times X \to X$ is $\pi$-uniform
and all orbit maps $\tilde{x}: G \to X$ are continuous then $\pi$ is continuous.

\begin{lemma} \label{lem:puni} \cite{me-fr}
Let %msAug $d$ be a metric on a $G$-space $X$  and $d$ generates the topology.
 $\mu$ be a uniformity on a  $G$-space $X$ which generates its topology. Then the action $\pi: G \times X \to X$ is $\pi$-uniform
in each of the following cases: \ben
\item $X$ is a $G$-group and $\mu$ is the right or left uniformity on $X$.
\item $X$ is the coset $G$-space $G/H$ with respect to the standard right uniformity %msAug(which always is compatible with the topology).
(which is always compatible with the topology).
\item $\mu$ is the uniformity of a $G$-invariant metric.
\item $X$ is a compact $G$-space and $\mu$ is the unique compatible uniformity on $X$.
\een
\end{lemma}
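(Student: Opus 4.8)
The plan is to verify Definition \ref{d:quasib} in each of the four cases, treating (1) and (2) as the core cases and deriving (3) and (4) from the same mechanism. First I would fix notation: given $\varepsilon \in \mu$ and $g_0 \in G$, I must produce $\delta \in \mu$ and a neighborhood $O$ of $g_0$ such that $(gx,gy)\in\varepsilon$ whenever $(x,y)\in\delta$ and $g\in O$. The guiding idea in all cases is to split the estimate into two halves using the group structure: pull the action of $g$ back to the action of $g_0$ composed with the small group element $g_0^{-1}g$, so that one half measures ``$g_0$ distorts $\delta$ into something still controlled by $\varepsilon$'' and the other half measures ``elements near $g_0$ move points only a little.''

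For case (1), where $X$ is a $G$-group with its right uniformity, a basic entourage has the form $\varepsilon_U=\{(x,y): xy^{-1}\in U\}$ for $U$ a neighborhood of the identity $e_X$. Given $U$, pick a neighborhood $W$ of $e_X$ with $W\cdot W\subseteq U$. Since each $g_0$-translation is a continuous automorphism of $X$, the set $g_0^{-1}(W)$ is again a neighborhood of $e_X$, so $\delta:=\varepsilon_{g_0^{-1}(W)}$ lies in $\mu$; this handles the distortion by $g_0$. For the neighborhood $O$ in $G$, continuity of the action at $(g_0,e_X)$ (more precisely, joint continuity of $(g,x)\mapsto gx$ using that orbit maps are continuous and the action is separately continuous on a topological group) yields a neighborhood $O$ of $g_0$ with $g\cdot g_0^{-1}(W)\subseteq$ a translate staying inside $W\cdot(\text{something})$; writing $gx=(g g_0^{-1})(g_0 x)$ and $(g_0x)(g_0y)^{-1}=g_0(xy^{-1})\in W$ when $(x,y)\in\delta$, I then need $(gg_0^{-1})$ applied appropriately to land the product in $U=W\cdot W$. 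Assembling: $(gx)(gy)^{-1}=g\big((xy^{-1})\big)\cdot(\text{correction from }gg_0^{-1})$ — the clean way is to note $gx\cdot(gy)^{-1}=g(xy^{-1}\cdot y)\cdot g(y)^{-1}$ is not a homomorphism identity, so I instead use that $g\mapsto$ translation is an automorphism for each fixed $g$, giving $(gx)(gy)^{-1}=g(xy^{-1})$, and then control $g(xy^{-1})$ for $xy^{-1}\in g_0^{-1}(W)$ and $g\in O$ via joint continuity of the action map restricted to $G\times g_0^{-1}(\overline{W})$.

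For case (2), the coset space $G/H$ with the standard right uniformity, the entourages are the images of the entourages $\varepsilon_U$ of $G$ under the quotient map $q:G\to G/H$, i.e. basic entourages are $\{(aH,bH): a\in Ub'\text{ for some }b'\in bH\}$ for $U$ a neighborhood of $e$ in $G$. The same two-step splitting applies: given $U$, choose symmetric $W$ with $W^3\subseteq U$ or so, set $\delta$ from $g_0^{-1}Wg_0$ (using that conjugation by the fixed element $g_0$ of $G$ is a homeomorphism of $G$ fixing $e$), and take $O:=Wg_0\cap g_0W$; then for $g\in O$ and $(aH,bH)\in\delta$ a short coset computation pushes $(g aH,g bH)$ into $\varepsilon_U$. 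Case (4) then follows because a compact $G$-space carries a unique compatible uniformity, which is totally bounded, and one can realize the required $\delta$ and $O$ directly from joint continuity of the (automatically continuous, since $X$ compact) action together with a Lebesgue-number/uniform-continuity argument — alternatively, cite that a compact $G$-space embeds equivariantly in a product of coset spaces, reducing to (2). Case (3), a $G$-invariant metric $\rho$, is the easiest: $\rho(gx,gy)=\rho(x,y)$ exactly, so one may take $\delta=\varepsilon$ and $O=G$; $\pi$-uniformity holds trivially.

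The main obstacle is packaging the joint-continuity input cleanly in cases (1) and (2) without circularity, since $\pi$-uniformity is precisely a quantitative form of joint continuity. The point — and the reason the lemma is true — is that on a topological \emph{group} (or its coset space) separate continuity plus the algebraic structure already force the kind of local uniform control we need: the distortion by the fixed element $g_0$ is handled purely algebraically (it is an automorphism, resp. an inner automorphism composed with the projection), and only the ``near $g_0$'' part genuinely uses continuity of the action, and there it suffices to use continuity of the single orbit-type map $g\mapsto g\cdot c$ on the relevant compact-closure piece. So the real work is choosing the nested neighborhoods $W\subseteq U$ correctly (how many factors of $W$ are needed: two for case (1), three for case (2) because cosets require an extra symmetrization) and verifying the coset bookkeeping in (2); everything else is formal.
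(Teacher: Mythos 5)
Your proposal is essentially correct, and a comparison with ``the paper's proof'' is moot here: the paper gives no proof of Lemma \ref{lem:puni} at all, deferring entirely to the citation \cite{me-fr}. Your identification of the key mechanisms is right in each case: for (3) invariance gives $\delta=\varepsilon$, $O=G$ trivially; for (4) the finite-cover/Lebesgue-entourage argument from joint continuity at the points $(g_0,x)$, $x\in X$, is the standard and correct route (but the offered ``alternative'' — embedding a compact $G$-space equivariantly into a product of coset spaces — is unjustified and should be dropped, as it is not a general fact); for (2) the computation $gbH\subseteq gVaH=(gVg^{-1})\,gaH$ reduces everything to arranging $gVg^{-1}\subseteq U$ for $g\in Wg_0$, which your choices $V:=g_0^{-1}Wg_0$ and $W=W^{-1}$, $W^3\subseteq U$ accomplish. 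Case (1) is where your write-up is needlessly convoluted: once you have the identity $(gx)(gy)^{-1}=g(xy^{-1})$ (valid because each $\pi^g$ is an automorphism; similarly $(gx)^{-1}(gy)=g(x^{-1}y)$ for the left uniformity), the whole case is a one-line application of joint continuity of $\pi$ at the single point $(g_0,e_X)$, where $\pi(g_0,e_X)=e_X$: this directly produces $O\ni g_0$ and $W\ni e_X$ with $\pi(O\times W)\subseteq U$, and $\delta:=\varepsilon_W$ works. The detour through $W\cdot W\subseteq U$, the set $g_0^{-1}(W)$, and the decomposition $g=(gg_0^{-1})g_0$ adds nothing and, as written, leaves the choice of $\delta$ not actually guaranteed to satisfy $g(\delta\text{-differences})\subseteq U$ until you invoke the joint-continuity step anyway; there is no circularity to worry about, since $\pi$-uniformity is being derived from continuity plus the algebraic identity, not assumed.
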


%\begin{remark} \label{r:simple}
%If $j: V \hookrightarrow \widehat{V}$ is the completion of a normed
%space $V$ then we have the following canonical equivariant inclusion
%of monoidal actions
%$$
%(\Theta(V),V) \rightrightarrows (\Theta(\widehat{V}), \widehat{V}).
%$$
%
%If the norm on $V$ is an ultra-norm then the same is true for the
%corresponding norm on $\widehat{V}$.
%\end{remark}

A function $||\cdot ||: G \to [0,\infty)$ on an abelian group $(G,+)$ is an \emph{ultra-norm} \cite{War}
if $||u||=0 \Leftrightarrow u=0$,  $||u||=||-u||$ and
$$||u+v|| \leq \max \{||u|| , ||v|| \} \ \ \ \forall \ u,v \in G.$$
A group $(G,+,||\cdot ||)$ with an ultra-norm $||\cdot ||$ is an
\emph{ultra-normed space}. The definition of an ultra-seminorm is
understood. It is easy to see that if the topology on $(G,+)$ can be
generated by a system of ultra-seminorms then $G$ is a
non-archimedean group (cf. Theorem \ref{t:condit}, the equivalence
(1) $\Leftrightarrow$ (5))
%1104
and its right (=left) uniformity is just the uniform structure induced on $G$ by the given system of ultra-seminorms.
Every abelian %1904non-Archimedean
 non-archimedean metrizable group admits an ultra-norm (see Theorems
6.4 and 6.6 in \cite{War}).

%2005
\subsection{Arens-Eells linearization theorem for actions}

%2004
Recall that the well known Arens-Eells linearization theorem (cf.
\cite{AE})
%? \footnote{About Dikran's comment regarding the density of
%the paragraph  which deals with Arens-Eells thm - let us talk about
%his remark.mm=ok let's talk, I have among others to explain [AE] original construction in details}
asserts that every uniform (metric) space can be
(isometrically) embedded into a locally convex vector space (resp.,
normed space).
%2005
For a metric space $(X,d)$ one can define a real normed space $(A(X), ||\cdot||)$
as the set of all formal linear combinations
$$
\sum_{i=1}^n c_i(x_i-y_i)
$$
where $x_i,y_i \in X$ and $c_i \in \R$. For every $u \in A(X)$ one may define the norm by
$$
||u||:=\inf \{\sum_{i=1}^n |c_i| d(x_i,y_i) :  \ \ u=\sum_{i=1}^n c_i(x_i-y_i) \}.
$$

Now if $(X,z)$ is a pointed space with some $z \in X$ then $x \mapsto x - z$ defines an isometric embedding
of $(X,d)$ into $A(X)$ (as a closed subset).

This theorem on isometric linearization of metric
spaces can be naturally extended to the case of non-expansive
semigroup actions provided that the metric is bounded \cite{Me-cs},
or, assuming only that the orbits are bounded \cite{Schroder}.
Furthermore, suppose that an action of a \emph{group} $G$ on a
metric space $(X,d)$ with bounded orbits is only uniform in the
sense of Definition \ref{d:quasib} (and not necessarily
non-expansive). Then again such an action admits an isometric
$G$-linearization on a normed space.

Here we give a non-archimedean version of Arens-Eells type theorem %(cf. \cite{AE})
%2004  ultra-semimetrics and non-archimedean uniformities.
 %non-archimedean uniformities, ultra-metrics and
for uniform group actions.
%! it seems to be new even for the case of trivial actions.
%%

\begin{thm} \label{t:AE}
Let $\pi: G \times X \to X$ be a continuous $\pi$-uniform action of
a topological group $G$ on a non-archimedean
%1104
Hausdorff
uniform space $(X,\mu)$. \ben
\item
Then there
exist a $\mathcal{NA}$
%1104
Hausdorff
Boolean $G$-group $E$ and a
%1104
uniform
$G$-embedding
$$
\a: X \hookrightarrow E
$$
such that $\a(X)$ is closed. Hence, $(X,\mu)$ is uniformly $G$-automorphizable (in $\mathcal{NA}$).

\item
%1504
Let $(X,d)$ be an %1905ultrametric
ultra-metric space and %1904 adding the word suppose
suppose there exists a $d$-bounded orbit $Gx_0$ for some %2104$x_0 \in X$.
$x_0 \in X.$ Then there exists an ultra-normed Boolean $G$-group $E$
and an isometric $G$-embedding $\a: X \hookrightarrow E$ such that
$\a(X)$ is closed.
\item Every %1905ultrametric
 ultra-metric space is isometric to a closed subset of an ultra-normed Boolean group.
%\footnote{So every ultrametric space is embedded isometrically into an ultra-normed Boolean group.}
 \een
\end{thm}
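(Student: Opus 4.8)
The plan is to deduce (3) as the special case $G=\{e\}$ of (2) (there $Gx_0=\{x_0\}$ is automatically $d$-bounded), and to obtain (2) by sharpening the construction used for (1); so the real content is to build the Boolean $G$-group $E$ together with the embedding $\a$, and to locate where each hypothesis enters. First I would adjoin a $G$-fixed point: put $X^{+}:=X\sqcup\{*\}$, let $G$ fix $*$, and extend the structure of $X$ to $X^{+}$ so that $*$ becomes uniformly isolated; in case (2) this means extending $d$ to an ultra-metric $d^{+}$ on $X^{+}$ with $\inf_{x\in X}d^{+}(x,*)>0$ (e.g.\ $d^{+}(x,*):=\max(d(x,x_0),1)$, the bounded-orbit hypothesis being used to make this choice compatible with the action). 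Let $B(X^{+})$ be the free Boolean group on the set $X^{+}$, realized as the group of finite subsets of $X^{+}$ under $\triangle$, and set $E:=B(X^{+})/\langle\{*\}\rangle$, identified with the group of finite subsets of $X$. Then $G$ permutes generators and fixes $\{*\}$, so it acts on $E$ by automorphisms, and $\a\colon X\to E$, $x\mapsto\{x\}$, is an injective $G$-map.

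Next I would topologize $E$. For an entourage $P$ of $X^{+}$ that is an equivalence relation (in case (2): $P_n=\{(p,q):d^{+}(p,q)<1/n\}$, an equivalence relation by the strong triangle inequality), let $N_P\subseteq E$ be the kernel of the homomorphism $E\to B(X^{+}/P)/\langle\{[*]\}\rangle$ induced by $X^{+}\to X^{+}/P$; concretely $N_P$ is the set of finite $S\subseteq X$ in which every $P$-class occurs an even number of times, equivalently the symmetric differences of ``edges'' $\{p,q\}$ with $(p,q)\in P$. Each $N_P$ is a subgroup, and the $N_P$ form a local base at $0$ of a non-archimedean group topology on $E$; in case (2) this base is countable, so the topology is metrizable and is induced by the ultra-seminorm
$$\|S\|:=\inf\Big\{\max_{1\le i\le n}d^{+}(p_i,q_i)\ :\ S=\{p_1,q_1\}\triangle\cdots\triangle\{p_n,q_n\}\ \text{in}\ E\Big\},$$
which is finite-valued (every finite $S$ decomposes into edges, using $\{x\}=\{x,*\}$ in $E$) and satisfies $N_{P_n}=\{S:\|S\|<1/n\}$. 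Thus $E$ is a Boolean group that is $\mathcal{NA}$, resp.\ ultra-normed.

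The key point, and the step I expect to be the main obstacle, is the separation that makes $\a$ a (uniform, resp.\ isometric) embedding with closed image. The bound $\|\{x\}\triangle\{y\}\|\le d(x,y)$ is given by the single-edge representation. For the reverse inequality, given $x\ne y$ I would take the clopen partition $\mathcal P$ of $X^{+}$ into open balls of radius $d(x,y)$; since $d^{+}(x,y)=d(x,y)$ the cells of $x$ and $y$ are distinct, so one can $2$-colour the cells by a function $f$ with $f=0$ on the cell of $*$ and distinct values on the cells of $x$ and $y$. Viewed as a homomorphism $E\to\Z_2$, $f$ is constant on cells, so $f(\{p,q\})=0$ whenever $d^{+}(p,q)<d(x,y)$; since $f(\{x\}\triangle\{y\})=1$, every edge-representation of $\{x\}\triangle\{y\}$ must use an edge of $d^{+}$-length $\ge d(x,y)$, whence $\|\{x\}\triangle\{y\}\|\ge d(x,y)$. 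Varying this—taking $\mathcal P$ fine enough to separate the finitely many points of a given $S\ne 0$ together with $*$—gives definiteness of $\|\cdot\|$ (and Hausdorffness of $E$ in case (1)), shows $0\notin\overline{\a(X)}$ (using $\inf_x d^{+}(x,*)>0$), and shows any $S$ with $|S|\ne 1$ has a neighbourhood $S\triangle N_P$ disjoint from the singletons; hence $\a(X)$ is closed. Finally I would check that $G$ acts on $E$ by topological automorphisms and that $G\times E\to E$ is continuous: for a fixed $g$ this reduces, via the description of $N_P$, to $g$ being a uniform automorphism of $(X^{+},d^{+})$ (true, as $g$ is one on $X$ and $*$ is uniformly isolated); joint continuity at $(g_0,S_0)$ follows by writing $gS\triangle g_0S_0=(gS_0\triangle g_0S_0)\triangle g(S\triangle S_0)$ and controlling the first summand by continuity of the orbit maps $x\mapsto gx$ ($x\in S_0$) and the second by $\pi$-uniformity (Definition \ref{d:quasib}). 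This yields (1) and (2), and (3) is (2) with $G=\{e\}$.
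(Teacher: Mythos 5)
Your construction is, at bottom, the same as the paper's: adjoin a $G$-fixed point, form the free Boolean group, and define the (semi)norm of an element as the infimum over its decompositions into ``edges'' of the maximal edge length. (Your $E=B(X^{+})/\langle\{*\}\rangle$ is canonically isomorphic to the paper's subgroup of even elements of the free Boolean group on $X\cup\{\theta\}$, with $\{x\}$ corresponding to $x-\theta$.) Where you genuinely diverge is in the proof of the central lower bounds. The paper proceeds combinatorially: it introduces \emph{normal configurations}, shows by explicit reduction moves that the infimum may be computed over the finite set of normal configurations of a given element (Claims 1--3), and then reads off the isometry ($x-y$ has only the normal configuration $\{(x,y)\}$), the Hausdorffness, and the closedness of $\a(X)$ from the minimal pairwise distance on the support. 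You instead argue by duality: a clopen ball-partition yields a $\Z_2$-valued homomorphism on $E$ that kills all short edges but not the element in question, which certifies the same lower bounds in one stroke. Both work; the colouring argument is arguably cleaner and avoids the reduction machinery, while the paper's normal-form lemma gives the extra information that the infimum is attained. Your treatment of the fixed point in case (2) also differs: you adjoin an abstract point with $d^{+}(x,*)=\max(d(x,x_0),1)$ and verify the strong triangle inequality directly, whereas the paper realizes $\theta$ as $\mathrm{cl}(Gx_0)$ inside $\exp(X)$ with the Hausdorff ultra-metric (this is where the paper visibly uses the bounded-orbit hypothesis; in your variant its role is less apparent, but that only means you may be proving slightly more than asserted). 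Two small points deserve more care than your sketch gives them: closedness of $\a(X)$ needs a lower bound on $\|S\triangle\{x\}\|$ that is \emph{uniform in} $x\in X$ (your partition must be chosen from the finite set $\mathrm{supp}(S)\cup\{*\}$ alone, with a case analysis on which cell the varying $x$ falls into -- this does go through), and in case (1) the passage from $\pi$-uniformity of the action on $X$ to uniformity on $E$ uses that the equivalence-relation entourages are closed under common refinement (the paper's analogue is closing the family of ultra-semimetrics under finite maxima).
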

%1104
\begin{proof}
(1) Every non-archimedean uniformity $\mu$ on $X$ can be generated
by a system $\{d_j\}_{j \in J}$ of ultra-semimetrics. Furthermore
one may assume that $d_j \leq 1.$
Indeed, every uniform partition of $X$ leads to the naturally defined
%1104    two-valued semimetric.
0, 1 ultra-semimetric. %%
We can suppose in addition that $X$ contains a $G$-fixed
point $\theta$. Indeed, adjoining if necessary a fixed point $\theta$ and defining
$d_j(x,\theta)=d_j(\theta ,x)=1$ for every $x \in X$, we get again an ultra-semimetric.

%1104
Furthermore one may assume that for any finite collection $d_{j_1}, d_{j_2}, \cdots, d_{j_m}$ from the system $\{d_j\}_{j \in J}$
the %1904ultra-seminorm
ultra-semimetric $max\{d_{j_1}, d_{j_2}, \cdots, d_{j_m}\}$ also
belongs to our system.
%

%\vskip 2cm
%!We modify the {\it Arens-Eells construction} \cite{AE}.
Consider the \emph{free Boolean group} $(P_F(X), +)$ over the set %2104$X$.
$X.$ The elements of $P_F(X)$ are finite subsets of $X$ and the
group operation $+$ is the
symmetric difference of subsets. The zero element %1904(representing
(represented by the empty subset) we denote by $\textbf{0}$.
Clearly, $u=-u$ for every $u \in P_F(X)$.

For every nonzero $u=\{x_1,x_2,x_3,
\cdots, x_m\} \in P_F(X)$, define the support $supp(u)$ as $u$ treating it as a subset
%!\footnote{In fact $supp(u)=u$ but it looks confusing ... have you better approach ?}
of $X$.
So $x \in X$ is a \emph{support element} of $u$ iff %2104$x \in \{x_1,x_2,x_3, \cdots, x_m\}$.
%\footnote{I admit I don't understand why we should define here support element.
%By this definition  $x$ is a \emph{support element} of $u$ simply if
%$x\in u$ another thing what if $x_1=x_2?$ we should say as in the
%old version distinct elements, no?}
$x \in \{x_1,x_2,x_3, \cdots, x_m\}.$ Let us say that $u$ is
\emph{even} (\emph{odd}) if the number of support elements $m$ is
even (resp., odd). Define the natural homomorphism $sgn: P_F(X) \to
\Z_2=\{\overline{0},\overline{1}\}$, where, $sgn(u)=\overline{0}$
iff $u$ is even. We denote by $E$ the subgroup
$sgn^{-1}(\overline{0})$ of all even elements in $P_F(X)$.

%1804
Consider the natural set embedding $$\iota: X \hookrightarrow P_F(X), \  \iota(x)=\{x\}.$$
Sometimes we will identify $x \in X$ and $\iota(x)=\{x\} \in P_F(X).$

Define also another embedding of sets
$$\a: X \to E, \ \ \a(x) =  x - \theta.$$ %We will identify sometimes $X$ with $\a(X)$.
Observe that $\a(x)-\a(y)= \iota(x) -\iota(y) = x-y$ for every $x,y \in X$.

By a \emph{configuration} we mean a finite subset of $X \times X$
(finite relations). Denote by $Conf$ the set of all configurations.
We can think %1904 adding "of"
of any $\omega \in Conf$ as a finite set of some pairs
%! (no restrictions -- some elements can repeat).
$$
\omega=\{(x_1,x_2), (x_3, x_4), \cdots , (x_{2n-1}, x_{2n})\},
$$
where all $\{x_i\}_{i=1}^{2n}$ are (not necessarily distinct) elements of $X$. If %these elements are distinct  (i.e.,
$x_i \neq x_k$ for all distinct $1\leq i,k \leq 2n$ then
$\omega$ is said to be \emph{normal}. %(or, \emph{irreducible}).
For every $\omega \in Conf$ the sum
$$
u:=\sum_{i=1}^{2n} x_i=\sum_{i=1}^{n} (x_{2i-1}-x_{2i}).
$$
necessarily belongs to $E$ and we say that $\omega$ \emph{represents} $u$ or, that $\omega$ is an $u$-\emph{configuration}.
Notation $\omega \in Conf(u)$. We denote by $Norm(u)$ the set of all normal configurations of $u$.
If $\omega \in Norm(u)$ then necessarily $\omega \subseteq supp(u) \times supp(u)$
%2004
and $supp(u)= \{x_1,x_2,  \cdots , x_{2n}\}$.
So, $Norm(u)$ is a finite set for any given $u \in E$.
%!!\footnote{Meny, what is the exact number of elements ? This is like
%asking: in How many different ways can a set of $2n$ elements be
%split into $n$ \textbf{ordered} pairs? The answer is
%$\frac{(2n)!}{n!}$ The reason is: we can arrange the $2n$ elements
%in a row and then decide that the first and the second elements are
%the first ordered pair,the third and the fourth elements  are the
%the second ordered pair and so on. At the end we have to recall
%there is no significance however to the order of the $n$ pairs.  We
%have $(2n)!$ possibilities to arrange the elements in a row. We have
%$n!$ different ways to arrange the $n$ pairs in a row.  So we have
%to divide $(2n)!$ by $n!$.}

%Let $(X,d)$ be an ultra-metric space and $\theta$ is a $G$-fixed point.
Let $j \in J$. Our aim is to define an ultra-seminorm $||\cdot||_j$ on the Boolean group
$(E, +)$ such that $d_j(x,y)=||x - y||_j$. %(of course, $x - y=x+y$) for every $x,y \in X$.
For every configuration $\omega$ we define its $d_j$-\emph{length} by
$$\varphi_j(\omega)=\max_{1 \leq i \leq n} d_j(x_{2i-1},x_{2i}).
$$

%We say that two representations are \emph{equivalent} if the
%corresponding lengths $\varphi_j$ are the same.

\noindent \textbf{Claim 1:}
For every even nonzero element $u \in E$ and every $u$-configuration
$$
\omega=\{(x_1,x_2), (x_3, x_4), \cdots , (x_{2n-1}, x_{2n})\}
$$
define the following elementary reductions:
\ben
\item
Deleting a trivial pair $(t,t)$. That is, deleting the pair $(x_{2i-1}, x_{2i})$ whenever $x_{2i-1}=x_{2i}$.
\item Define the \emph{trivial inversion at $i$} of $\omega$ as the replacement of $(x_{2i-1}, x_{2i})$
by the pair in the reverse order $(x_{2i}, x_{2i-1})$.
\item Define the \emph{basic chain reduction rule} as follows.
Assume that there exist distinct $i$ and $k$ such that $x_{2i}= x_{2k-1}.$
We delete in the configuration $\omega$ two pairs $(x_{2i-1}, x_{2i})$, $(x_{2k-1}, x_{2k})$
and add the following new pair $(x_{2i-1}, x_{2k})$.
\een

Then in all three cases we get again an $u$-configuration.
%2004   The reduction (1) and (2) does not change
The reductions (1) and (2) do not change
the $d_j$-length of the configuration.
Reduction (3) cannot exceed the $d_j$-length.

\begin{proof}  Comes directly %(and accordingly)
from the axioms of ultra-semimetric. In the proof of (3) observe that
$$x_{2i-1} + x_{2i} + x_{2k-1} + x_{2k}=x_{2i-1} +  x_{2k}$$
in $E$.
This ensures that the new configuration is again an $u$-configuration.
\end{proof}

\noindent \textbf{Claim 2:}
For every even nonzero element $u \in E$ and every $u$-configuration $\omega$
 there exists a normal $u$-configuration $\nu$ such that $\varphi_j(\nu) \leq \varphi_j(\omega)$.
\begin{proof}
Using Claim 1 after finitely many reductions of $\omega$ we get a normal $u$-configuration $\nu$
such that $\varphi_j(\nu) \leq \varphi_j(\omega)$.
\end{proof}

Now we define the desired ultra-seminorm $||\cdot||_j$.
%We proceed as follows.
%\bit
%\item
% $||\textbf{0}||=0$.
% \item For every odd element $u$ we define $||u||=1$.
%\item
For every %even nonzero
$u \in E$ define
$$||u||_j=\inf_{\omega \in Conf(u)} \varphi_j(\omega).$$

\noindent \textbf{Claim 3:} For every nonzero $u \in E$ we have $$||u||_j=\min_{\omega \in Normal(u)} \varphi_j(\omega).$$
\begin{proof}
By Claim 2 %1904$u$
 it is enough to compute $||u||_j$  via normal $u$-configurations. So, since $Norm(u)$ is finite, we may replace $\inf$ by $\min$.
\end{proof}
%\eit

\noindent \textbf{Claim 4:} $||\cdot||_j$ is an ultra-seminorm on $E$.
\begin{proof}
Clearly,  $||u||_j \geq 0$ and $||u||_j=||-u||_j$
%26.9 %!
(even $u=-u$) for every $u \in E$.
For the $\textbf{0}$-configuration  $\{(\theta,\theta)\}$ we obtain that %1304 $||\theta|| \leq d(\theta,\theta)=0$. So $||\theta||=\textbf{0}$.
$||\textbf{0}||_j \leq d_j(\theta,\theta)=0$. So $||\textbf{0}||_j=0$. %%
We have to show that
$$||u+v||_j \leq \max \{||u||_j , ||v||_j \} \ \ \ \forall \ u,v \in E.$$
Assuming the contrary, there exist  configurations
$$\{(x_i,y_i)\}_{i=1}^n, \ \ \   \{(t_i,s_i)\}_{i=1}^m$$
with
$$u=\sum_{i=1}^n (x_i-y_i), \ v=\sum_{i=1}^m (t_i-s_i)$$ such that
$$||u+v||_j> c:=\max\{\max_{1 \leq i \leq n} d_j(x_i,y_i),\max_{1 \leq i
\leq m} d_j(t_i,s_i)\}$$ but this contradicts the definition of
$||u+v||_j$ because $$u+v=\sum_{i=1}^n (x_i-y_i)+\sum_{i=1}^m
(t_i-s_i)$$ and hence

$$\omega:=\{(x_1,y_1), \cdots, (x_n,y_n), (t_1,s_1), \cdots ,(t_m,s_m)\}$$
is a configuration of $u+v$ with $||u+v||_j > \varphi_j(\omega)=c$, a  contradiction to the definition of $||\cdot||_j$.
\end{proof}

\textbf{Claim 5:} $\alpha: (X,d_j) \hookrightarrow (E,||\cdot||_j), \a(x)=x-\theta$ is an isometric embedding, that
is,
$$||x-y||_j=d_j(x,y) \ \ \ \forall \ x,y \in X.$$
\begin{proof}
By Claim 3 we may compute via normal configurations.
For the element $u=x-y \neq \textbf{0}$ only possible \emph{normal } configurations are $\{(x,y)\}$ or $\{(y,x)\}$. So $||x-y||_j=d_j(x,y).$
\end{proof}

\vskip 0.5cm

\textbf{Claim 6:} For any given $u \in E$ with $u \neq \textbf{0}$ we have
$$||u||_{j} \geq \min\{d_j(x_i,x_k): \ \  x_i,x_k \in supp(u), \ x_i \neq x_k\}.$$
\begin{proof} Easily comes from Claims 2 and 3.
%because all (finitely many) normal configurations
%of $u$ have the same support set %$u=\sum_{i=1}^n (x_i-y_i)$ have the same support set $\{x_1,y_1, \cdots, x_n, y_n\}$..
\end{proof}

\textbf{Claim 7:} For any given $u \in E$ with $u \neq \textbf{0}$ there exists $j_0 \in J$ such that $||u||_{j_0} >0$.
\begin{proof}
Since $u \neq \textbf{0}$ we have at least two elements in %2104$supp(u)$.
$supp(u).$  Since $(X,\mu)$ is Hausdorff the system $\{d_j\}_{j \in
J}$ of ultra-semimetrics separates points of $X$. So some finite
subsystem $d_{j_1}, d_{j_2}, \cdots, d_{j_m}$ separates points of
$supp(u)$. By our assumption
the %1904ultra-seminorm
ultra-semimetric $d_{j_0}:=max\{d_{j_1}, d_{j_2}, \cdots, d_{j_m}\}$
belongs to our system $\{d_j\}_{j \in J}$. Then
$$\min\{d_{j_0}(x_i,x_k): \ \  x_i,x_k \in supp(u), x_i \neq x_k\} >0.$$
Claim 6 implies that $||u||_{j_0} >0$.
\end{proof}

%!
It is easy to see that the family $\{||\cdot||_j\}_{j \in J}$ of
%1104 ultra-semimetrics generates
ultra-seminorms
induces a non-archimedean %1304topological
group topology on the Boolean group $E$ and
a %1904non-Archimedean
 non-archimedean uniformity $\mu_*$ which is the right (=left) uniformity on $E$.
By Claim 7 the topology on $E$ is Hausdorff.

We have the natural group action $$\overline{\pi}: G \times E \to E, (g,u) \mapsto gu$$
induced by the given action $G \times X \to X$.
Clearly, $g(u+v)=gu +gv$ for every $(g,u,v) \in G \times E \times E$.
%2004
So this action is by automorphisms.
Since $g \theta =\theta$ for every $g \in G$ it follows that $\a: X \to E$ is a $G$-embedding.

Now we show that the action $\overline{\pi}$ of $G$ on $E$ is uniform
and continuous. Indeed, the original action on $(X,\mu)$ is $\pi$-uniform. Hence, for
every $j \in J$, $\varepsilon> 0$ and $g_0 \in G$, there exist: %21.9$t \in J$, $\delta >0$ and
%21.9a nbd $O(g_0)$ of $g_0$ in $G$ such that
%21.9$$d_j(gx,gy) \in \varepsilon  \ \ \ \ \forall \ d_t(x,y) \leq \delta, \ g \in O.$$    %!
 a finite subset $\{j_1,\ldots,  j_n\}$ of $J, \ \delta >0$ and a neighborhood $O(g_0)$ of $g_0$ in $G$ such that
$$d_j(gx,gy) \leq \varepsilon  \ \ \ \ \forall \ \max_{1 \leq i \leq
n} d_{j_i}(x,y)\leq \delta, \ g \in O.$$
Then by Claim 3 it is easy to see that
%26.9 I confirm that your change is good
%21.9$$||gu||_j \leq \varepsilon  \ \ \ \ \forall \ ||u||_t < \delta, \ g \in O.$$
$$||gu||_j \leq \varepsilon  \ \ \ \ \forall \ \max_{1 \leq i \leq
n}||u||_{j_i} < \delta, \ g \in O.$$ This implies that the action
$\overline{\pi}$ of $G$ on $(E, \mu_*)$ is uniform.
%1104 DELETING two lines
%, where $\mu_*$ is the (non-archimedean) uniformity generated on $A(X)$ by the
%system of ultra-seminorms $\{||\cdot||_j\}_{j \in J}$.
%%
Claim 5 implies that $\a: X \hookrightarrow E$ is a topological $G$-embedding.
Since
%2004 $X=\a(X)$
$\a(X)$
algebraically spans $E$ it easily follows that every
orbit mapping %msAug$S \to A(X), \ s \mapsto su$
 $G \to E, \ g \mapsto gu$ is continuous for every
$u \in E$. So we can conclude that $\overline{\pi}$ is continuous (see %2104Remark
the remark after Definition \ref{d:quasib}) and $E$ is a $G$-group.

\vskip 0.3cm

 %!
%28.9
%1104
%Finally we check that $\a(X)$ is closed in $A(X)$.
%Let $b \in A(X)$ and $b \notin \a(X)$. Then $b \neq x-\theta \ \forall x\in X$. For every $u=x-\theta-b$ and every $j \in J$ we have
%$||x-b||_j=||u||_j=1$ because $d_j(y,\theta)=1 \ \forall y\in X$ and in every presentation of $u$ we will have a monom of the form $y-\theta$.
%%This implies that there is no net $f_i=y_i-\theta \in\a(X)$ converging to $b$.

Finally we check that $\a(X)$ is closed in $E$. Let $u \in E$
and $u \notin \a(X)$. %Let $supp(u) =\{x_1,y_1, \cdots, x_n, y_n\}$.
Since $u-x+\theta \neq \textbf{0}$  for every $x \in X$,
we can suppose that %1304 in $supp(b)$   also adding: () at $supp(b) \cap (X \setminus\{\theta\})$
there are at least two elements in $supp(u) \cap (X \setminus
\{\theta\})$. Similarly to the proof of Claim 7 we may choose $j_0 \in J$ and $\varepsilon_1 >0$ such that
$$\varepsilon_1:=\min\{d_{j_0}(x_i,x_k): \ \  x_i,x_k \in supp(u), \ x_i \neq x_k\} >0.$$
Furthermore, one may assume in addition that $$\varepsilon_2:=\min\{d_{j_0}(x_i,\theta): \ \  x_i \in supp(u), \ x_i \neq \theta\} >0.$$
Define $\varepsilon_0:=min\{\varepsilon_1,\varepsilon_2\}$.
%1904As we already mentioned we can choose two distinct elements $s,t \in supp(u) \setminus \{\theta\}$.
%1904Then for every normal configuration $\omega$ of $u-x+\theta \neq \textbf{0}$ we have

%1904$$\varphi_{j_0}(\omega) \geq \min\{d_{j_0}(s,t), d_{j_0}(s,\theta), d_{j_0}(t, \theta)\} \geq \varepsilon_0.$$
For every $x \in X,$ every normal configuration $\omega$ of
$u-x+\theta \neq \textbf{0}$ contains an element $(s,t)$ such that
%2004  $s$ and $t$ are distinct and
$\{s,t\}\subset supp(u)\cup
\{\theta\}.$ Therefore,
 $$\varphi_{j_0}(\omega)\geq d_{j_0}(s,t)\geq \varepsilon_0.$$  So by
Claim 3 we obtain $||u-x+\theta||_j \geq \varepsilon_0$ for every $x
\in X$.

%
%If $\theta \notin supp(b)$ then  then  $  \cap supp(b-x-\theta)$.
%
%If $\theta \in supp(b)$ then there are at least three elements in $supp(b) \setminus \{\theta\} $

Summing up we finish the proof of (1).

\vskip 0.5cm

%2004  more flexible proof. We will discuss it ...
(2) The proof in the second case is similar.
We only explain why we may suppose that $X$ contains a $G$-fixed point.
%!
Indeed, as in the paper of Schr\"oder \cite[Remark 5]{Schroder}
we can look at $(X,d)$ as embedded into the space $exp(X)$ of all bounded closed subsets endowed with the standard Hausdorff metric $d_H$ 
defined by
$$
d_H(A,B):=\max \{\sup_{a \in A} d(a,B), \ \sup_{b \in B} d(A,b)\}.
$$ 
The closure $cl(Gx_0)$ of the orbit $Gx_0$ in $X$ is bounded and defines an element $\theta \in exp(X)$.
Consider the metric subspace $X':=X \cup \{\theta\} \subset exp(X)$. It is easy to see that
 the induced action of $G$ on $X'$ is well defined and remains uniform (Definition \ref{d:quasib}) with respect to the metric $d_H|_{X'}$.
 Clearly, $\theta$ is a $G$-fixed point in $X'.$ This implies that all orbit maps $G \to X'$ are continuous. It follows that the action of $G$ on
$X'$ is continuous (see the remark after Definition \ref{d:quasib}). 

Finally observe that since $d$ is an ultra-metric the Hausdorff metric $d_H$ on $exp(X)$  
is also an ultra-metric. Hence, %2104$d_H|_{X'}$ also is
$d_H|_{X'}$ is an ultra-metric on $X'$. 
%0106adding 
To prove the strong triangle inequality for $d_H$ we will use the
following lemma. 

\begin{lemma}\label{lem:sti}
Let $(X,d)$ be an ultra-metric space and  $A,B,C$  subsets of $X.$
Then $$\sup_{a\in A} d(a,C) \leq \max \{\sup_{a\in
A} d(a,B), \ \sup_{b\in B} d(b,C)\}$$
\end{lemma}
\begin{proof}
Let $M:=\sup_{a\in A} d(a,C).$ Assuming the contrary,
$$M>d(a,B) \quad \forall a\in A$$ and also $$M>d(b,C) \quad \forall
b\in B.$$ Set $a_0\in A.$ Since $M>d(a,B) \quad \forall a\in A$, we
have in particular $M>d(a_0,B)$. So there exists $b_0\in B$ such that
$M>d(a_0,b_0).$ Now, $M>d(b,C) \quad \forall b\in B,$ hence, there
exists $c_0\in C$ such that $M>d(b_0,c_0).$ Since $d$ is an
ultra-metric we obtain that $M>d(a_0,c_0)\geq d(a_0,C).$ Since $a_0$
is an arbitrary element of $A$  we get that $M>\sup_{a\in A} d(a,C)=M.$  
This  clearly  contradicts  our assumption.
\end{proof}
We can now prove the strong triangle inequality for $d_H$. Using
Lemma \ref{lem:sti} twice we obtain that
$$\sup_{a\in A} d(a,C) \leq \max \{\sup_{a\in
A} d(a,B), \ \sup_{b\in B} d(b,C)\}$$ 
and also (by switching $A\leftrightarrow C$)  
$$\sup_{c\in C} d(A,c) \leq \max\{\sup_{b\in B} d(A,b), \ \sup_{c\in C} d(B,c)\}.$$ This
implies that $$d_H(A,C)\leq \max\{d_H(A,B), \ d_H(B,C)\}.$$

\vskip 0.7cm

(3) Directly follows from (2). %!

\end{proof}

\begin{thm} \label{epic=dense}
Let $G$ be a non-archimedean group. If a continuous homomorphism $f: M \to G$ is
an epimorphism in the category of Hausdorff topological groups then $f(M)$ is dense in $G$.
\end{thm}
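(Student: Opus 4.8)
The plan is to apply Pestov's criterion (Fact \ref{pest}). Let $H$ denote the closure of $f(M)$ in $G$ and let $X := G/H$ be the left coset $G$-space with its standard right uniformity, which (since $G$ is non-archimedean) is a non-archimedean uniformity compatible with the topology; this uses Lemma \ref{l:firstpart}(2) together with the fact that quotient uniformities of non-archimedean uniformities are non-archimedean. By Fact \ref{pest}, to show $f$ is dense it suffices to show: whenever $F_G(X)$ is trivial (i.e. $f$ is an epimorphism), then $X$ is a single point, i.e. $H = G$. So the strategy is to prove the contrapositive implication packaged the other way: \emph{if $X$ is a nontrivial $G$-space of the form $G/H$, then $F_G(X)$ is nontrivial}, which by Fact \ref{pest} forces $f$ not to be an epimorphism unless $f(M)$ is dense.

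The key input is Theorem \ref{t:AE}(1): the action of $G$ on $X = G/H$ is $\pi$-uniform by Lemma \ref{lem:puni}(2), and $(X,\mu)$ is a non-archimedean Hausdorff uniform space, so there exists a non-archimedean Hausdorff Boolean $G$-group $E$ together with a uniform $G$-embedding $\a: X \hookrightarrow E$ with $\a(X)$ closed. In particular $X$ is uniformly $G$-automorphizable. Now recall that the canonical morphism $X \to F_G(X)$ into the free topological $G$-group is, by the universal property, the ``initial'' $G$-group receiving $X$; the existence of \emph{any} $G$-group $E$ into which $X$ embeds as a $G$-subspace means the map $X \to F_G(X)$ is itself an embedding (this is exactly the definition of automorphizable recalled in Section \ref{s:aut}, and it is where Proposition \ref{p:aut} was applied in the compact case). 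Hence $X \to F_G(X)$ is an embedding. If $X$ has more than one point, then $F_G(X)$ contains a homeomorphic copy of $X$ and so is certainly not the trivial (one-element, or rather infinite cyclic discrete) group; thus $F_G(X)$ is nontrivial.

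Putting this together: assume $f: M \to G$ is an epimorphism. By Fact \ref{pest}, $F_G(X)$ is trivial, where $X = G/H$ and $H = \overline{f(M)}$. By the previous paragraph, triviality of $F_G(X)$ forces $X$ to be a single point (since $X$ embeds into $F_G(X)$), i.e. $H = G$, i.e. $f(M)$ is dense in $G$. This completes the argument.

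The step I expect to require the most care is the passage ``$X$ uniformly $G$-automorphizable in $\mathcal{NA}$'' $\Rightarrow$ ``$X \to F_G(X)$ is an embedding,'' and then reading off that a nontrivial $X$ makes $F_G(X)$ nontrivial. One must be careful that $F_G(X)$ here is the free topological (or uniform) $G$-group on the uniform $G$-space $X$, and that its universal property is with respect to $G$-groups equipped with their right uniform structure — which is precisely the setting in which Theorem \ref{t:AE}(1) delivers $E$ (the embedding is uniform and $E$ carries its right uniformity). Once the categorical bookkeeping is set up as in \cite{Me-F}, the conclusion that an embedding $X \hookrightarrow E$ into some $G$-group yields an embedding $X \hookrightarrow F_G(X)$, and hence $|F_G(X)| > 1$ whenever $|X| > 1$, is immediate; the real content is entirely in Theorem \ref{t:AE} and Fact \ref{pest}.
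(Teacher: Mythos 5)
Your proposal is correct and follows essentially the same route as the paper: both reduce to Pestov's criterion (Fact \ref{pest}), equip $X=G/H$ with its non-archimedean right uniformity, invoke Lemma \ref{lem:puni}.2 and Theorem \ref{t:AE}.1 to get a $G$-embedding of $X$ into a Hausdorff $\NA$ $G$-group, and conclude that $F_G(X)$ is nontrivial unless $H=G$. The only cosmetic difference is that the paper verifies the non-archimedeanness of the right uniformity on $G/H$ directly from a base of clopen subgroups, while you appeal to it as a general fact about quotient uniformities; both are fine.
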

\begin{proof} Denote by $H$ the closure of the subgroup $f(M)$ in $G$. We have to show that $H=G$. Assuming the
contrary consider the \emph{nontrivial} Hausdorff coset $G$-space $G/H$.
Recall that the sets $$\tilde{U}:=\{(aH,bH): bH\subseteq UaH\},$$ where $U$ runs over
the neighborhoods of $e$ in $G$, form a uniformity base  on $G/H$. This
 uniformity (called the right uniformity) is compatible with the
quotient topology (see for instance \cite{DR81}).

The fact that $G$ is $\mathcal{NA}$ implies that the right
uniformity on $G/H$ is non-archimedean. Indeed, if $\mathcal{B}$ is
a local base at $e$ consisting of clopen subgroups then
$\tilde{\mathcal{B}}:=\{\tilde{U}: U\in \mathcal{B}\}$ is  a base
for the right uniformity of $G/H$ and its elements are equivalence
relations. To see this just use the fact that $H$ as well as the
elements of $\mathcal{B}$ are all subgroups of $G.$
 %msAugNA.
%msSep$\mathcal{NA}$.
 By Lemma \ref{lem:puni}.2 the natural
left action $\pi:G\times G/H\to G/H $ is $\pi$-uniform. Obviously
this action is  also continuous. Hence, we can apply
  %msAUGBy Proposition %msAug\ref{p:AE}
Theorem
\ref{t:AE}.1 to conclude that the nontrivial $G$-space $X:=G/H$ is
$G$-automorphizable in $\mathcal{NA}$. In particular,
we obtain that there exists a \emph{nontrivial} equivariant morphism of the $G$-space $X$ to %msAugan automorphizable $G$-space
a Hausdorff $G$-group $E$. This implies that the free topological $G$-group $F_G(X)$ of the $G$-space $X$ is not trivial.
Now by the criterion of Pestov (Fact \ref{pest}) we conclude that $f: M \to G$ is not an epimorphism.
\end{proof}

%msJul moving it to another place \begin{corol} \label{c:aut}
%msJulEvery continuous action of a topological group $G$ on a compact
%msJulzero-dimensional space $X$ is automorphizable in the sense of
%msJul\cite{Me-F}.
%msJul\end{corol}

\bibliographystyle{amsalpha}% {amsplain}

\end{document}